\documentclass[12pt]{article}
\usepackage{graphicx} 
\usepackage{geometry}
\usepackage[dvipsnames,svgnames]{xcolor}

\usepackage{tcolorbox}
\usepackage{amssymb}
\usepackage{amsmath}
\usepackage{amsthm}
\usepackage{mathrsfs}
\usepackage{color}
\usepackage{hyperref}
\usepackage{ytableau}
\usepackage{tikz} 
\usepackage{graphicx}
\usepackage{rotating} 

\usepackage{pict2e}

\newtheorem{theorem}{Theorem}[section]
\newtheorem{conjecture}[theorem]{Conjecture}

\newtheorem{proposition}[theorem]{ Proposition}
\newtheorem{corollary}[theorem]{ Corollary}

\newtheorem{problem}[theorem]{Problem}

\newtheorem{remark}[theorem]{{Remark}}

\usepackage{graphicx} 

\usepackage{xcolor}

\newcommand{\BPD}[2][1pc]{%
\setlength{\unitlength}{#1}
\def\BPDframe{%
    \thinlines%
    \color{lightgray}%
    \put(0,0){\line(0,1){1}}%
    \put(1,0){\line(0,1){1}}%
    \put(0,0){\line(1,0){1}}%
    \put(0,1){\line(1,0){1}}%
    \linethickness{0.08\unitlength}%
    \color{teal}}
\def\O{
\begin{picture}(1,1)
    \BPDframe
\end{picture}}
\def\delO{%
    \begin{picture}(1,1)
    \BPDframe
    \color{red}
    \qbezier(0,0)(0,1)(0,1)
    \qbezier(0,1)(1,1)(1,1)
    \qbezier(1,1)(1,0)(1,0)
    \qbezier(1,0)(0,0)(0,0)
    \end{picture}%
}
\def\MB{%
   \begin{picture}(1,1)
      \BPDframe
      \color{orange}
    \qbezier(0.5,0)(0.5,0.2)(0.5,0.2)
    \qbezier(1,0.5)(0.8,0.5)(0.8,0.5)
    \qbezier(0.8,0.5)(0.5,0.5)(0.5,0.2)
    \qbezier(0.5,1)(0.5,0.8)(0.5,0.8)
    \qbezier(0,0.5)(0.2,0.5)(0.2,0.5)
    \qbezier(0.5,0.8)(0.5,0.5)(0.2,0.5)
   \end{picture}%
}
\def\BoldMBup{%
   \begin{picture}(1,1)
      \BPDframe
      \color{orange}
    \qbezier(0.5,0)(0.5,0.2)(0.5,0.2)
    \qbezier(1,0.5)(0.8,0.5)(0.8,0.5)
    \qbezier(0.8,0.5)(0.5,0.5)(0.5,0.2)
      \linethickness{2pt}
    \qbezier(0.5,1)(0.5,0.8)(0.5,0.8)
    \qbezier(0,0.5)(0.2,0.5)(0.2,0.5)
    \qbezier(0.5,0.8)(0.5,0.5)(0.2,0.5)
   \end{picture}%
}\def\BoldMBdown{%
   \begin{picture}(1,1)
      \BPDframe
     \color{orange}
    \qbezier(0.5,1)(0.5,0.8)(0.5,0.8)
    \qbezier(0,0.5)(0.2,0.5)(0.2,0.5)
    \qbezier(0.5,0.8)(0.5,0.5)(0.2,0.5)
      \linethickness{2pt}
    \qbezier(0.5,0)(0.5,0.2)(0.5,0.2)
    \qbezier(1,0.5)(0.8,0.5)(0.8,0.5)
    \qbezier(0.8,0.5)(0.5,0.5)(0.5,0.2)
   \end{picture}%
}
\def\delF{%
    \begin{picture}(1,1)
    \BPDframe
    \color{red}
    \qbezier(0.5,0)(0.5,0.2)(0.5,0.2)
    \qbezier(1,0.5)(0.8,0.5)(0.8,0.5)
    \qbezier(0.8,0.5)(0.5,0.5)(0.5,0.2)
    \qbezier(0,0)(0.5,0.5)(1,1)
    \end{picture}%
}
\def\delFF{%
    \begin{picture}(1,1)
    \BPDframe
    \qbezier(0.5,0)(0.5,0.2)(0.5,0.2)
    \qbezier(1,0.5)(0.8,0.5)(0.8,0.5)
    \qbezier(0.8,0.5)(0.5,0.5)(0.5,0.2)
    \color{red}
    \qbezier(0,0)(0.5,0.5)(1,1)
    \end{picture}%
}
\def\F{
\begin{picture}(1,1)
    \BPDframe
    \qbezier(0.5,0)(0.5,0.2)(0.5,0.2)
    \qbezier(1,0.5)(0.8,0.5)(0.8,0.5)
    \qbezier(0.8,0.5)(0.5,0.5)(0.5,0.2)
\end{picture}}
\def\J{
\begin{picture}(1,1)
    \BPDframe
    \qbezier(0.5,1)(0.5,0.8)(0.5,0.8)
    \qbezier(0,0.5)(0.2,0.5)(0.2,0.5)
    \qbezier(0.5,0.8)(0.5,0.5)(0.2,0.5)
\end{picture}}
\def\delJ{%
    \begin{picture}(1,1)
    \BPDframe
    \color{red}
    \qbezier(0.5,1)(0.5,0.8)(0.5,0.8)
    \qbezier(0,0.5)(0.2,0.5)(0.2,0.5)
    \qbezier(0.5,0.8)(0.5,0.5)(0.2,0.5)
    \qbezier(0,0)(0.5,0.5)(1,1)
    \end{picture}%
}
\def\delJJ{%
    \begin{picture}(1,1)
    \BPDframe
    \qbezier(0.5,1)(0.5,0.8)(0.5,0.8)
    \qbezier(0,0.5)(0.2,0.5)(0.2,0.5)
    \qbezier(0.5,0.8)(0.5,0.5)(0.2,0.5)
    \color{red}
    \qbezier(0,0)(0.5,0.5)(1,1)
    \end{picture}%
}
\def\Z{
\begin{picture}(1,1)
    \BPDframe
    \qbezier(0.5,0)(0.5,0.2)(0.5,0.2)
    \qbezier(0,0.5)(0.2,0.5)(0.2,0.5)
    \qbezier(0.2,0.5)(0.5,0.5)(0.5,0.2)
\end{picture}}
\def\L{
\begin{picture}(1,1)
    \BPDframe
    \qbezier(0.5,1)(0.5,0.8)(0.5,0.8)
    \qbezier(1,0.5)(0.8,0.5)(0.8,0.5)
    \qbezier(0.5,0.8)(0.5,0.5)(0.8,0.5)
\end{picture}}
\def\I{
\begin{picture}(1,1)
    \BPDframe
    \qbezier(0.5,0)(0.5,0.5)(0.5,1)
\end{picture}}
\def\H{
\begin{picture}(1,1)
    \BPDframe
    \qbezier(0,0.5)(0.5,0.5)(1,0.5)
\end{picture}}
\def\x{
\begin{picture}(1,1)
    \BPDframe
    \qbezier(0,0.5)(0.5,0.5)(1,0.5)
    \qbezier(0.5,0)(0.5,0.3)(0.5,0.3)
    \qbezier(0.5,1)(0.5,0.7)(0.5,0.7)
\end{picture}}
\def\X{
\begin{picture}(1,1)
    \BPDframe
    \qbezier(0.5,0)(0.5,0.5)(0.5,1)
    \qbezier(0,0.5)(0.3,0.5)(0.3,0.5)
    \qbezier(1,0.5)(0.7,0.5)(0.7,0.5)
\end{picture}}
\def\grayX{
\begin{picture}(1,1)
\put(0,0){\color{gray!30}\rule{\unitlength}{\unitlength}}%
    \BPDframe
    \qbezier(0.5,0)(0.5,0.5)(0.5,1)
    \qbezier(0,0.5)(0.3,0.5)(0.3,0.5)
    \qbezier(1,0.5)(0.7,0.5)(0.7,0.5)
\end{picture}}
\def\delXAboveX{%
    \begin{picture}(1,1)
    \BPDframe
    \color{yellow}
    \qbezier(0.5,0)(0.5,0.5)(0.5,1)
    \qbezier(0,0.5)(0.3,0.5)(0.3,0.5)
    \qbezier(1,0.5)(0.7,0.5)(0.7,0.5)
    \qbezier(0,0)(0,1)(0,1)
    \qbezier(0,1)(1,1)(1,1)
    \qbezier(1,1)(1,0)(1,0)
    \qbezier(1,0)(0,0)(0,0)
    \end{picture}%
}
\def\blueX{%
    \begin{picture}(1,1)
    \BPDframe
    \color{blue}
    \qbezier(0.5,0)(0.5,0.5)(0.5,1)
    \qbezier(0,0.5)(0.3,0.5)(0.3,0.5)
    \qbezier(1,0.5)(0.7,0.5)(0.7,0.5)
    \qbezier(0,0)(0,1)(0,1)
    \qbezier(0,1)(1,1)(1,1)
    \qbezier(1,1)(1,0)(1,0)
    \qbezier(1,0)(0,0)(0,0)
    \end{picture}%
}
\def\delX{%
    \begin{picture}(1,1)
    \BPDframe
    \qbezier(0.5,0)(0.5,0.5)(0.5,1)
    \qbezier(0,0.5)(0.3,0.5)(0.3,0.5)
    \qbezier(1,0.5)(0.7,0.5)(0.7,0.5)
    \color{red}
    \qbezier(0,0)(0,1)(0,1)
    \qbezier(0,1)(1,1)(1,1)
    \qbezier(1,1)(1,0)(1,0)
    \qbezier(1,0)(0,0)(0,0)
    \end{picture}%
}
\def\BoldX{%
    \begin{picture}(1,1)
    \BPDframe
    \qbezier(0.5,0)(0.5,0.5)(0.5,1)
    \linethickness{2pt}
    \qbezier(0,0.5)(0.3,0.5)(0.3,0.5)
    \qbezier(1,0.5)(0.7,0.5)(0.7,0.5)
    \end{picture}%
}
\def\BolddelX{%
    \begin{picture}(1,1)
    \BPDframe
    \qbezier(0.5,0)(0.5,0.5)(0.5,1)
    \color{red}
    \qbezier(0,0)(0,1)(0,1)
    \qbezier(0,1)(1,1)(1,1)
    \qbezier(1,1)(1,0)(1,0)
    \qbezier(1,0)(0,0)(0,0)
    \color{teal}
    \linethickness{2pt}
    \qbezier(0,0.5)(0.3,0.5)(0.3,0.5)
    \qbezier(1,0.5)(0.7,0.5)(0.7,0.5)
    \end{picture}%
}
\def\B{
\begin{picture}(1,1)
    \BPDframe
    \qbezier(0.5,0)(0.5,0.2)(0.5,0.2)
    \qbezier(1,0.5)(0.8,0.5)(0.8,0.5)
    \qbezier(0.8,0.5)(0.5,0.5)(0.5,0.2)
    \qbezier(0.5,1)(0.5,0.8)(0.5,0.8)
    \qbezier(0,0.5)(0.2,0.5)(0.2,0.5)
    \qbezier(0.5,0.8)(0.5,0.5)(0.2,0.5)
\end{picture}}
\def\delBUP{%
    \begin{picture}(1,1)
    \BPDframe
    \qbezier(0.5,0)(0.5,0.2)(0.5,0.2)
    \qbezier(1,0.5)(0.8,0.5)(0.8,0.5)
    \qbezier(0.8,0.5)(0.5,0.5)(0.5,0.2)
    \qbezier(0.5,1)(0.5,0.8)(0.5,0.8)
    \qbezier(0,0.5)(0.2,0.5)(0.2,0.5)
    \qbezier(0.5,0.8)(0.5,0.5)(0.2,0.5)
    \color{red}
    \qbezier(0,0)(0.5,0.5)(1,1)
    \end{picture}%
}
\def\delBDown{%
    \begin{picture}(1,1)
    \BPDframe
    \qbezier(0.5,1)(0.5,0.8)(0.5,0.8)
    \qbezier(0,0.5)(0.2,0.5)(0.2,0.5)
    \qbezier(0.5,0.8)(0.5,0.5)(0.2,0.5)
    \qbezier(0.5,0)(0.5,0.2)(0.5,0.2)
    \qbezier(1,0.5)(0.8,0.5)(0.8,0.5)
    \qbezier(0.8,0.5)(0.5,0.5)(0.5,0.2)
    \color{red}
    \qbezier(0,0)(0.5,0.5)(1,1)
    \end{picture}%
}
\def\delB{%
    \begin{picture}(1,1)
    \BPDframe
    \qbezier(0.5,1)(0.5,0.8)(0.5,0.8)
    \qbezier(0,0.5)(0.2,0.5)(0.2,0.5)
    \qbezier(0.5,0.8)(0.5,0.5)(0.2,0.5)
    \qbezier(0.5,0)(0.5,0.2)(0.5,0.2)
    \qbezier(1,0.5)(0.8,0.5)(0.8,0.5)
    \qbezier(0.8,0.5)(0.5,0.5)(0.5,0.2)
    \color{red}
    \qbezier(0,0)(0,1)(0,1)
    \qbezier(0,1)(1,1)(1,1)
    \qbezier(1,1)(1,0)(1,0)
    \qbezier(1,0)(0,0)(0,0)
    \end{picture}%
}
\def\BoldupdelB{%
    \begin{picture}(1,1)
    \BPDframe
    \qbezier(0.5,0)(0.5,0.2)(0.5,0.2)
    \qbezier(1,0.5)(0.8,0.5)(0.8,0.5)
    \qbezier(0.8,0.5)(0.5,0.5)(0.5,0.2)
    \color{red}
    \qbezier(0,0)(0,1)(0,1)
    \qbezier(0,1)(1,1)(1,1)
    \qbezier(1,1)(1,0)(1,0)
    \qbezier(1,0)(0,0)(0,0)
    \color{teal}
    \linethickness{2pt}
    \qbezier(0.5,1)(0.5,0.8)(0.5,0.8)
    \qbezier(0,0.5)(0.2,0.5)(0.2,0.5)
    \qbezier(0.5,0.8)(0.5,0.5)(0.2,0.5)
    \end{picture}%
}
\def\BolddowndelB{%
    \begin{picture}(1,1)
    \BPDframe
    \qbezier(0.5,1)(0.5,0.8)(0.5,0.8)
    \qbezier(0,0.5)(0.2,0.5)(0.2,0.5)
    \qbezier(0.5,0.8)(0.5,0.5)(0.2,0.5)
    \color{red}
    \qbezier(0,0)(0,1)(0,1)
    \qbezier(0,1)(1,1)(1,1)
    \qbezier(1,1)(1,0)(1,0)
    \qbezier(1,0)(0,0)(0,0)
    \color{teal}
    \linethickness{2pt}
    \qbezier(0.5,0)(0.5,0.2)(0.5,0.2)
    \qbezier(1,0.5)(0.8,0.5)(0.8,0.5)
    \qbezier(0.8,0.5)(0.5,0.5)(0.5,0.2)
    \end{picture}%
}
\def\BoldBUP{%
    \begin{picture}(1,1)
    \BPDframe
    \qbezier(0.5,0)(0.5,0.2)(0.5,0.2)
    \qbezier(1,0.5)(0.8,0.5)(0.8,0.5)
    \qbezier(0.8,0.5)(0.5,0.5)(0.5,0.2)
    \linethickness{2pt}
    \qbezier(0.5,1)(0.5,0.8)(0.5,0.8)
    \qbezier(0,0.5)(0.2,0.5)(0.2,0.5)
    \qbezier(0.5,0.8)(0.5,0.5)(0.2,0.5)
    \end{picture}%
}
\def\BoldBDown{%
    \begin{picture}(1,1)
    \BPDframe
    \qbezier(0.5,1)(0.5,0.8)(0.5,0.8)
    \qbezier(0,0.5)(0.2,0.5)(0.2,0.5)
    \qbezier(0.5,0.8)(0.5,0.5)(0.2,0.5)
    \linethickness{2pt}
    \qbezier(0.5,0)(0.5,0.2)(0.5,0.2)
    \qbezier(1,0.5)(0.8,0.5)(0.8,0.5)
    \qbezier(0.8,0.5)(0.5,0.5)(0.5,0.2)
    \end{picture}%
}
\def\BolddelBUP{%
    \begin{picture}(1,1)
    \BPDframe
    \qbezier(0.5,0)(0.5,0.2)(0.5,0.2)
    \qbezier(1,0.5)(0.8,0.5)(0.8,0.5)
    \qbezier(0.8,0.5)(0.5,0.5)(0.5,0.2)
    \color{red}
    \qbezier(0,0)(0.5,0.5)(1,1)
    \color{teal}
    \linethickness{2pt}
    \qbezier(0.5,1)(0.5,0.8)(0.5,0.8)
    \qbezier(0,0.5)(0.2,0.5)(0.2,0.5)
    \qbezier(0.5,0.8)(0.5,0.5)(0.2,0.5)
    \end{picture}%
}
\def\BolddelBDown{%
    \begin{picture}(1,1)
    \BPDframe
     \qbezier(0.5,1)(0.5,0.8)(0.5,0.8)
    \qbezier(0,0.5)(0.2,0.5)(0.2,0.5)
    \qbezier(0.5,0.8)(0.5,0.5)(0.2,0.5)
    \color{red}
    \qbezier(0,0)(0.5,0.5)(1,1)
    \linethickness{2pt}
    \color{teal}
    \qbezier(0.5,0)(0.5,0.2)(0.5,0.2)
    \qbezier(1,0.5)(0.8,0.5)(0.8,0.5)
    \qbezier(0.8,0.5)(0.5,0.5)(0.5,0.2)
    \end{picture}%
}
\def\b{
\begin{picture}(1,1)
    \BPDframe
    \qbezier(0.5,0)(0.5,0.2)(0.5,0.2)
    \qbezier(0,0.5)(0.2,0.5)(0.2,0.5)
    \qbezier(0.2,0.5)(0.5,0.5)(0.5,0.2)
    \qbezier(0.5,1)(0.5,0.8)(0.5,0.8)
    \qbezier(1,0.5)(0.8,0.5)(0.8,0.5)
    \qbezier(0.5,0.8)(0.5,0.5)(0.8,0.5) 
\end{picture}}
\def\C{
\begin{picture}(1,1)
    \BPDframe
    \qbezier(0,0)(0.5,0.5)(1,1)
    \qbezier(1,0)(0.5,0.5)(0,1)
\end{picture}}
\def\S{
\begin{picture}(1,1)
    \BPDframe
    1
\end{picture}}
\def\M##1{\begin{picture}(1,1)%
    
    \put(0,0.2){\makebox[\unitlength]{\(##1\)}}
\end{picture}}
\def\Lb##1{%
   \begin{picture}(1,1)
      \BPDframe
      \put(0,0.2){\makebox[\unitlength]{\(##1\)}}
   \end{picture}%
}
\begin{array}{@{\,}c@{\,}}{}
{\def\arraystretch{0}
\setlength{\arraycolsep}{0pc}
\color{teal}
\begin{array}{@{}l@{}}%
#2\end{array}}
\end{array}}

\begin{document}

 \begin{center}
{\Large\bf Principal specializations of Grothendieck polynomials}

\vskip 6mm
{\small   }
Haojun Bai, Feng Gu,  Peter L. Guo, Jiaji Liu

\end{center}

\begin{abstract}
Motivated by Stanley's ``Schubert shenanigans'' paper,  commendable attempts have been made to understand the principal specializations of Schubert or Grothendieck polynomials. In this paper, we prove that when a  permutation $w$ does not contain  the $1423$ pattern, the principal specialization of the corresponding   $\beta$-Grothendieck polynomial can be expressed nonnegatively in terms of the occurrences of patterns in $w$. Using  an inverse conservation principle, we further obtain  the nonnegativity expansion for permutations avoiding the $1342$ pattern. Our results  partially resolve   conjectures raised  respectively by Gao (independently observed  by Gaetz),  Me\'sz\'aros--Tanjaya, and Dennin. The proofs are achieved  based upon a reduction algorithm performing on  the classic  pipe dream model of $\beta$-Grothendieck polynomials. 
\end{abstract}

\section{Introduction}

The
Schubert polynomial  $\mathfrak{S}_w(x)$, indexed by a permutation $w$,  was  introduced by   Lascoux  and   Schützenberger \cite{lascoux1989fonctorialite}, representing  the Schubert class $[X_w]$ in  the  cohomology ring of the    flag variety. Its $K$-theory counterpart, called the  Grothendieck polynomial $\mathfrak{G}_w(x)$ \cite{LS-Gro}, is the polynomial representative of the Schubert class in the Grothendieck ring of the    flag variety. Fomin and Kirillov   \cite{fomin1994grothendieck} defined the $\beta$-Grothendieck polynomial $\mathfrak{G}_w^{(\beta)}(x)$  which specializes to $\mathfrak{S}_w(x)$ and $\mathfrak{G}_w(x)$ by taking $\beta=0$ and $\beta=-1$, respectively. Schubert/Grothendieck polynomials are pivotal objects in the study of Schubert calculus for the flag variety, and admit a rich collection of combinatorial models: including for example the classic pipe dream model \cite{bergeron1993rc,fomin1994grothendieck, KNUTSON2004161,knutson2005grobner, lenart2006grothendieck} and  the relatively newly developed bumpless pipe dream model \cite{LLS,LLS-2, Wei,BS}.

In this paper, we work in the setting of $\beta$-Grothendieck polynomials, focusing on   the principal specialization of  $\mathfrak{G}_w^{(\beta)}(x)$:
\[
\Upsilon_w{(\beta)}:=\mathfrak{G}_w^{(\beta)}(\mathbf{1})=\mathfrak{G}_w^{(\beta)}(x)|_{x_i=1}.
\]
Combinatorially, $\Upsilon_w{(\beta)}$ is a weighted counting of pipe dreams or bumpless pipe dreams of $w$.
Setting  $\beta=0$ gives  the principal specialization of the Schubert polynomial:
\[
\Upsilon_w:=\Upsilon_w(0)=\mathfrak{S}_w(\mathbf{1}).
\]
There has been different notation for principal specializations in the literature, and here we adopt the notation used in \cite{anderson2026computationsamplingschubertspecializations, MPP-2, morales2025grothendieck}. 
We design an algorithm on pipe dreams which maps injectively  a pipe dream of a permutation $w$ to a   pipe dream corresponding to a subword of $w$. When $w$ does not contain the $1423$ pattern, we prove that the map is a bijection. As a consequence, we deduce  that  for the class of permutations $w$ avoiding  $1423$, $\Upsilon_w{(\beta)}$ can be formulated nonnegatively in terms of  the occurrences of  patterns in $w$. We also find an inverse conservation principle which enables  us to obtain the same nonnegativity for  permutations avoiding  the $1342$ pattern. Our results partially resolve  conjectures due to Gao \cite{gao2021principal} (independently by Gaetz), Me\'sz\'aros and Tanjaya \cite{meszaros2022inclusion}, and Dennin \cite{ALCO_2025__8_3_745_0}, respectively.

\subsection{Background on principal specializations} A fundamental  formula for the specialization of Schubert polynomials was given by  Macdonald \cite{Mac}  (see  \cite{FS,HPSW} for alternative proofs),  stating   that 
\[
\Upsilon_w=\frac{1}{\ell !}\sum_{(a_1,\ldots, a_\ell)\in \mathrm{Red}(w)} a_1\cdots a_\ell,
\]
where $\ell$ is the length of $w$, and  the sum is over reduced words of $w$. In the  ``Schubert shenanigans'' paper \cite{stanley},  Stanley asked various enumeration and 
asymptotic/extremal problems concerning $\Upsilon_w$. 
These problems were systematically 
investigated, and for recent developments  see for example  Weigandt  \cite{weigandt2018schubert},  Morales,   Pak  and   Panova \cite{MPP-1}, Gao \cite{gao2021principal}, Me\'sz\'aros and Tanjaya \cite{meszaros2022inclusion}, Zhang \cite{Zhang-1}, Guo and Lin \cite{guo2024schubert}, Chou and  Setiabrata \cite{CS},  and the very near publication by  
 Anderson,   Panova  and   Petrov \cite{anderson2026computationsamplingschubertspecializations}. 

 Of particular interest among previous work is the possible  relationship  between   principal specializations and patterns in permutations. Let $S_n$ denote the set of permutations of $\{1,2,\ldots, n\}$. We allow $n=0$ and in this case  set $S_0=\{\emptyset\}$. We shall use the one-line notation, that is, write $w\in S_n$ as $w=w(1)\cdots w(n)$. 
 For two words $u=u(1)\cdots u(m)$ and $v=v(1)\cdots v(n)$ of positive integers, 
  write $u\leq v$ if $u$ appears as a subword in $v$, that is,  there exist $1\leq i_1<\cdots<i_m\leq n$ such that $u(k)=v(i_k)$ for $1\leq k\leq m$. We declare  $\emptyset\leq v$ for every subword $v$. Throughout, all words under consideration have distinct entries. Define $\mathrm{perm}(u)$ as the permutation whose entries have the same relative order as $u$. For example, $\operatorname{perm}(2574)=1342$. Given   permutations  $w\in S_n$ and $u\in S_m$ where $m\leq n$, we say that a subword $w(i_1)\cdots w(i_m)$    of $w$ is  a $u$ pattern if  $u=\mathrm{perm}(w(i_1)\cdots w(i_m))$. Let $p_u(w)$ denote the number of occurrences  of the  $u$ pattern  in $w$. For example, we have $p_{132}(1432)=3$. Note that we always set $p_{\emptyset}(w)=1$.

For $w\in S_n$, define $c_w$ recursively  by setting $c_\emptyset=1$ and for $n\geq 1$, 
\[
c_w := \Upsilon_w - \sum_{\text{$v\in S_m$ with $0\leq m<n$}} c_v\cdot p_v(w).
\]
Equivalently, this  yields  the following formulation  for the specialization $\Upsilon_w$:
\begin{equation}\label{enq-11}
\Upsilon_w=\sum_{v} c_v\cdot p_v(w).
\end{equation}
Gao \cite{gao2021principal}  made the following bold prediction, which, as remarked in \cite[Remark 4.3]{gao2021principal},  was also observed independently by Christian Gaetz.

\begin{conjecture}[\cite{gao2021principal}]\label{conj:Gao}
For any permutation $w$, we have
$c_{w}\geq 0$. Equivalently,
\[
\Upsilon_w=\sum_{v} c_v\cdot p_v(w), \ \ \ \text{where $c_v\in \mathbb{Z}_{\geq 0}$}.
\]
\end{conjecture}

Note that Conjecture \ref{conj:Gao} would immediately imply the lower bounds for $\Upsilon_w$ which have previously  been established in \cite{weigandt2018schubert, gao2021principal, guo2024schubert}. 

Me\'sz\'aros and Tanjaya  \cite{meszaros2022inclusion} noticed   that \eqref{enq-11} can be equivalently expressed  as 
 \[
\Upsilon_w=\sum_{\emptyset\leq v\leq w} c_{\mathrm{perm}(v)}.
 \]
Using inclusion-exclusion, it follows that
\[
c_w=\sum_{\emptyset\leq v\leq w} (-1)^{|w|-|v|} \Upsilon_{\mathrm{perm}(v)},
\]
where $|\cdot|$ stands for the number of entries in a word.
Me\'sz\'aros and Tanjaya \cite{meszaros2022inclusion}
proposed  the following stronger conjecture. 

\begin{conjecture}[\cite{meszaros2022inclusion}]\label{conj:mes} 
For any permutation $w$ and  a subword $u$ of $w$, 
\[
 \sum_{u\le v\le w} (-1)^{|w|-|v|}\Upsilon_{\mathrm{perm}(v)}\ge0.
\]
This reduces to Conjecture \ref{conj:Gao} in the case when $u=\emptyset$. 
\end{conjecture}

M\'esz\'aros and Tanjaya \cite{meszaros2022inclusion} proved Conjecture \ref{conj:mes}  in the case when $w$ avoids both the   $1423$  pattern  and the  $1432$ pattern. Their proof relies on the realization of Schubert polynomials as the characters of flagged Weyl modules, which  have proved  to be  a powerful tool in the recent study of   Schubert polynomials \cite{2018Schubert,fink2021zero, guo2024schubert}.

The story surrounding principal specializations of Grothendieck polynomials is relatively new. 
When $w$ is a vexillary permutation (that is, a permutation avoiding the $2143$ pattern), the specialization $\Upsilon_w{(\beta)}$ has been considered by   Morales,  Pak and Panova \cite{MPP-2}. For recent progress, we refer to  the ``Grothendieck shenanigans'' paper by   Morales,   Panova,  Petrov and Yeliussizov \cite{morales2025grothendieck}.  
We shall next describe a conjecture by   Dennin \cite{ALCO_2025__8_3_745_0}, which can be thought of as a Grothendieck analogue  of Conjecture \ref{conj:Gao}.  
For $w\in S_n$, define  $c_w{(\beta)} \in \mathbb{Z}[\beta]$ recursively as follows: set $c_{\emptyset}{(\beta)}= 1$, and for $n\geq 1$, let 
\[
    c_w{(\beta)} := \Upsilon_w{(\beta)}- \sum_{\text{$v\in S_m$ with $0\leq m<n$}} c_v{(\beta)}\cdot p_v(w).
    \]
    Equivalently,
\[
c_w{(\beta)}=\sum_{\emptyset\leq v\leq w} (-1)^{|w|-|v|} \Upsilon_{\mathrm{perm}(v)}{(\beta)}.
\]

 Dennin \cite{ALCO_2025__8_3_745_0}
conjectured that $c_w{(\beta)} $ has nonnegative coefficients. 

\begin{conjecture}[\cite{ALCO_2025__8_3_745_0}]\label{Conj:Hugn}
    For any permutation \( w\), we have \( c_w{(\beta)} \in \mathbb{Z}_{\geq 0}[\beta] \). Equivalently,\[
\Upsilon_w(\beta)=\sum_{v} c_v(\beta)\cdot p_v(w), \ \ \ \text{where $c_v(\beta)\in \mathbb{Z}_{\geq 0}[\beta]$}.
\]
\end{conjecture}

Letting $\beta=0$,   Conjecture \ref{Conj:Hugn} specializes  to  Conjecture \ref{conj:Gao}. 
Dennin \cite{ALCO_2025__8_3_745_0} settled Conjectu re \ref{Conj:Hugn} for permutations avoiding simultaneously  the two   patterns $1243$ and $2143$. 
The proof used in \cite{ALCO_2025__8_3_745_0} relies on   the bumpless pipe dream model of Grothendieck polynomials. 
In the Schubert setting, the same technique   can prove Conjecture \ref{conj:Gao} for permutations avoiding the single $1243$ pattern  \cite{ALCO_2025__8_3_745_0}.

\subsection{Our results} In this paper, 
we prove Conjecture \ref{Conj:Hugn} for two new classes of permutations, each characterized by the avoidance of a single pattern of length four.

\begin{theorem}\label{thm:Grothendieck_main}
Conjecture \ref{Conj:Hugn} holds for every permutation $w$ avoiding the $1423$ pattern.  
\end{theorem}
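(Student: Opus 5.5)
We will prove the equivalent statement that $\Upsilon_w(\beta)=\sum_{\emptyset\le v\le w}c_{\mathrm{perm}(v)}(\beta)$ with each summand a polynomial in $\mathbb{Z}_{\geq 0}[\beta]$. The idea is to split the set of pipe dreams of $w$ into pieces indexed by subwords $v\le w$. The weight of each piece will depend only on $\mathrm{perm}(v)$, and this weight is the candidate for $c_{\mathrm{perm}(v)}(\beta)$.

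Concretely, we use the classic pipe dream model: $\Upsilon_w(\beta)=\sum_{P}\beta^{|P|-\ell(w)}$, the sum running over (not necessarily reduced) pipe dreams $P$ whose Demazure product is $w$. We then design a reduction algorithm $\Phi$. Given a pipe dream $P$ of $w$, it repeatedly deletes a pipe $i$ together with the row and column it ``owns'' when this pipe is inessential, for instance when it can be straightened without changing the Demazure product of the rest. The algorithm stops at a pipe dream $Q$ that is ``reduced-irreducible''; call such a $Q$ \emph{primitive}. Deleting pipes corresponds to deleting entries of $w$, so $\Phi(P)=(v,Q)$ where $v\le w$ is the subword of surviving values and $Q$ is a primitive pipe dream of $\mathrm{perm}(v)$. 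The algorithm must be designed so that the number of crosses removed equals $\ell(w)-\ell(\mathrm{perm}(v))$ plus the number of removed non-reduced crosses, so that $\beta$-weights are tracked and $\Phi$ is weight preserving up to this accounting. We would choose deletions canonically, e.g.\ always processing the largest-labelled removable pipe first, so that $\Phi$ is well defined and injective. Injectivity means that from $(v,Q)$ we can reinsert the deleted pipes in a forced way.

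The key identity to establish is the following. For each $u\in S_m$, let $d_u(\beta)=\sum_{Q\text{ primitive for }u}\beta^{|Q|-\ell(u)}\in\mathbb{Z}_{\ge0}[\beta]$. If $\Phi$ is a bijection onto $\{(v,Q): v\le w,\ Q\text{ primitive for }\mathrm{perm}(v)\}$, then $\Upsilon_w(\beta)=\sum_{v\le w}d_{\mathrm{perm}(v)}(\beta)$. Every subword of a $1423$-avoiding permutation is again $1423$-avoiding. Hence Möbius inversion over the subword poset, applied exactly as in the inclusion--exclusion formula for $c_w(\beta)$, gives $c_w(\beta)=d_w(\beta)$ by induction on $n$, and the latter is nonnegative. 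So everything reduces to proving surjectivity. Given a subword $v$ of $w$ and a primitive $Q$ for $\mathrm{perm}(v)$, we must show that the missing values of $w$ can be reinserted as pipes, one at a time, producing a pipe dream of $w$ that $\Phi$ maps back to $(v,Q)$.

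The main obstacle is this surjectivity and reinsertion step, and it is here that $1423$-avoidance must enter. We would try inserting the missing values in increasing order of position. Each new pipe is routed along a canonical path, either hugging the staircase boundary or following the new row and column, and it crosses only the pipes it must cross in order to realize the inversions of $w$. The danger is that a new pipe needs to pass below a pipe $a$ and above pipes $b<c$ with $a<b<c$ placed in between, which forces a bump that $\Phi$ would not undo. A case analysis of where the inserted entry sits relative to the existing entries should show that such a configuration is exactly an occurrence of $1423$: the inserted value plays the role of ``4'' between the ``1'' and the ``23''. Absent that pattern, the path is forced and locally reversible, and $\Phi$ deletes precisely the inserted pipe. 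We expect this case check, together with verifying that the $\beta$-weight of non-reduced crossings is preserved under reinsertion, to take most of the work.
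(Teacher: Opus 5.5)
You have the same architecture as the paper: strip removable pipes from a pipe dream of $w$ to reach a core pipe dream of a subword $v$, show this is a bijection onto $\bigsqcup_{v\le w}\mathrm{CMRPD}(v)$ when $w$ avoids $1423$, and invert over the Boolean lattice of subwords to get $c_w(\beta)=d_w(\beta)$. Your model (Demazure product $w$, weight $\beta^{|P|-\ell(w)}$) is equivalent to marked reduced pipe dreams, a marked bump being a redundant cross.

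\textbf{The gap.} Everything that carries the proof is deferred. You never say which pipes are removable or what deleting one does, and weight preservation, injectivity and surjectivity all depend on that choice. The paper calls pipe $j$ removable when
(i) its entry column consists of crosses, all traversed by $j$, followed by unmarked bumps;
(ii) no cross traversed by $j$ has an unmarked bump directly above it;
(iii) $j$ passes through no marked bump.
Deletion removes that column and, in each column to its left, either the cross $j$ passes horizontally or the two half-tiles between the diagonals. Each condition does specific work:
\begin{itemize}
\item Conditions (i) and (iii) make the $\beta$-weight exactly invariant. Your bookkeeping ``plus the number of removed non-reduced crosses'' is wrong as stated. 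Removing a redundant cross lowers the $\beta$-degree by an amount depending on $P$, not on $(v,Q)$. That breaks $\Upsilon_w(\beta)=\sum_{v\le w}d_{\mathrm{perm}(v)}(\beta)$, so such removals must be forbidden outright.
\item Condition (ii), with (iii), forces the path of $j$ in each earlier column to be either a single horizontal cross or a step between two stacked bumps. It also makes the reinsertion $\Psi_j$ forced: insert a cross below a tile that is not an unmarked bump, otherwise split that bump. This is the actual content of injectivity.
\end{itemize}
You also need that deleting one removable pipe neither destroys nor creates removability of another (Proposition \ref{p3.3} and Remark \ref{aghh-678}). A canonical order such as ``largest label first'' does not give you this. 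Without it, deletions can cascade and surjectivity becomes much harder to control.

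\textbf{The surjectivity obstruction is misidentified.} Reinsertion is deterministic, and it fails only when $\Psi_j(Q)$ is not reduced. That means the new pipe $j$ crosses some $k>j$ twice: once vertically in its own column, and once horizontally at a tile $(x,y)$ further left. The paper then argues as follows.
\begin{itemize}
\item By construction, $(x-1,y)$ is a cross or a marked bump. Its other pipe $\ell$ satisfies $\ell>k$.
\item Choosing $k$ to exit as close to $j$ as possible, $\ell$ exits above $j$.
\item A suitable top-left bump northwest of $(x-1,y)$ supplies a pipe $i<j$ exiting above $\ell$.
\end{itemize}
So $i\,\ell\,j\,k$ is a $1423$ pattern in which the reinserted value plays the ``2'', not the ``4'' as you guessed. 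You can check this on $w=1423$. The only core pipe dreams missed by $\Phi$ are the two for the subword $143$, and reinserting $2$ makes it cross pipe $3$ twice.

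Until you fix a precise removability rule and carry out this double-crossing argument, the proposal is a plan rather than a proof.
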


The prove   Theorem \ref{thm:Grothendieck_main}, we employ   the pipe dream model for Grothendieck polynomials. We construct  an algorithm which maps  a pipe dream of a permutation $w$ injectively  to a smaller pipe dream corresponding to a subword of $w$. When $w$ avoids the $1423$ pattern, we prove that the injection is in fact  a bijection.  
This would allow us to interpret    the polynomial $c_w(\beta)$  as a weight counting of certain  pipe dreams of $w$. The techniques developed in  \cite{meszaros2022inclusion} or \cite{ALCO_2025__8_3_745_0} do  not seem to apply directly to       Theorem \ref{thm:Grothendieck_main}.  Compared with the operations on bumpless pipe dreams  used in \cite{ALCO_2025__8_3_745_0}, our  algorithm  on pipe dreams is essentially different. 

In fact, we shall prove a stronger result.

\begin{theorem}\label{main-1}
Fix a permutation $w$ and a subword $u$  of $w$. If $w$ avoids the $1423$ pattern, then 
\[
\sum_{u\leq v\leq w} (-1)^{|w|-|v|} \Upsilon_{\mathrm{perm}(v)}{(\beta)} \in \mathbb{Z}_{\geq 0}[\beta].
\]
This yields  Theorem \ref{thm:Grothendieck_main} by letting $u=\emptyset$. 
\end{theorem}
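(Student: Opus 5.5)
We plan to interpret the alternating sum combinatorially through pipe dreams. Write $\Upsilon_w(\beta)=\sum_{P\in \mathrm{PD}(w)}\beta^{|P|-\ell(w)}$, where $\mathrm{PD}(w)$ is the set of (not necessarily reduced) pipe dreams of $w$ and $|P|$ is the number of crosses. The first step is an algorithm $\Phi$ that takes a pipe dream $P$ of $w\in S_n$ and deletes one pipe canonically. We would choose the pipe entering at row $1$, or more precisely the pipe labelled by a canonical position $i$ of $w$. We remove its crosses and resolve the resulting elbow configurations by local moves (slides of crosses and reorganisation of the bumping tiles). The output is a pipe dream of $\mathrm{perm}(w')$, where $w'$ is $w$ with the entry $w(i)$ deleted, and the $\beta$-weight is controlled: the excess $|P|-\ell$ either is preserved or drops in a tracked way. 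Iterating gives, for each $P$, a distinguished subword $v(P)\le w$, namely the entries whose pipes were deleted in the order dictated by the algorithm. The key property to prove is that $\Phi$ is injective and weight-compatible. For $1423$-avoiding $w$ we would show it is in fact a bijection onto the disjoint union of pipe dreams of the smaller subwords.

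With this in hand, the plan is to show the following. For every subword $v$ of $w$, the set $\mathrm{PD}(\mathrm{perm}(v))$ decomposes as a disjoint union, over $u'\le v$, of sets $\mathcal{C}(u')$ of ``core'' pipe dreams. Each set $\mathcal{C}(u')$ depends only on $u'$ and not on the ambient $v$. Concretely, each pipe dream of $\mathrm{perm}(v)$ is reached uniquely by lifting a core pipe dream of some $u'\le v$, inserting the pipes of $v\setminus u'$ in the canonical way. This gives
\[
\Upsilon_{\mathrm{perm}(v)}(\beta)=\sum_{u'\le v} C_{u'}(\beta),\qquad C_{u'}(\beta)=\sum_{Q\in\mathcal{C}(u')}\beta^{\mathrm{exc}(Q)}.
\]
Möbius inversion on the Boolean lattice of subwords then gives
\[
\sum_{u\le v\le w}(-1)^{|w|-|v|}\Upsilon_{\mathrm{perm}(v)}(\beta)=\sum_{u'\,:\,u'\cup u\supseteq w} C_{u'}(\beta),
\]
and the right-hand side is manifestly in $\mathbb{Z}_{\ge0}[\beta]$. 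Here the sum runs over $u'\le w$ containing all entries of $w$ not in $u$. A cleaner variant of the same argument uses a sign-reversing involution: pair $(v,P)$ with $(v\cup\{a\},P')$, where $a$ is the smallest entry of $w\setminus v$ such that the pipe of $a$ can be inserted into $P$ canonically. The fixed points then have positive sign. We would use whichever formulation the algorithm supports most naturally.

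The main obstacle is the surjectivity of the lifting, that is, the claim that inserting a pipe into a pipe dream of $\mathrm{perm}(v)$ always yields a valid pipe dream of the larger permutation, independent of the other deleted entries. This is where $1423$-avoidance must enter. We would first analyse the local configuration around the inserted pipe. In the nonreduced setting a pipe may cross another pipe twice, and a would-be inverse move can fail precisely when an entry $a$ sits with a larger entry to its right, followed by a pair forming the ``$23$'' above it; these are the configurations that a $1423$ pattern produces. We would verify case by case on the tiles (crosses, elbows, and double crossings contributing $\beta$) that when no such pattern exists, the insertion move terminates with the correct permutation and inverts $\Phi$. Well-definedness and independence of the insertion order across different entries would be the second delicate point, handled by a commutation lemma for two deletions.
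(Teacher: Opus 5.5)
There is a genuine gap. Your architecture is the paper's: strip a pipe dream down to a core of a subword, prove injectivity, use $1423$-avoidance for surjectivity, then invert on the Boolean lattice. Your inversion step is also correct and a little more economical than the paper's. From $\Upsilon_{\mathrm{perm}(v)}=\sum_{u'\le v}C_{u'}$ for all $v\le w$ you get $\sum_{u'\supseteq w\setminus u}C_{u'}$. This coincides with the paper's $d_{u,w}$, since a pipe dream lies in $\mathrm{CMRPD}(u,w)$ exactly when its set of non-removable pipes contains $w\setminus u$, so the separate map $\Phi_u$ is unnecessary.

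\textbf{The deletion rule and the notion of core are missing.} The pipes to delete cannot be chosen by a canonical position of $w$. Both the label set $u'$ and the property of being a core have to be read off from $P$ by an intrinsic local criterion. Otherwise nothing makes $\mathcal{C}(u')$ independent of the ambient $v$, which you assert without ever defining a core. The missing idea is the paper's \emph{removable pipe}:
\begin{enumerate}
\item the entry column of pipe $j$ consists of crossings, all traversed by $j$, followed by unmarked bumps;
\item no crossing on $j$ has an unmarked bump directly above it;
\item $j$ meets no marked bump.
\end{enumerate}
One then proves that deleting a removable pipe neither creates nor destroys removability of any other pipe (Proposition~\ref{p3.3} and Remark~\ref{aghh-678}). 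Hence deletions commute and the core is canonical. It also follows that after re-inserting pipes, exactly the inserted ones are removable, which surjectivity needs beyond reducedness.

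\textbf{The weight must be preserved exactly.} You allow the excess to ``drop in a tracked way''. But your identity, with $C_{u'}$ weighted by the core's own excess, needs exact preservation. The paper gets this from the third condition: a removable pipe passes through no marked bump, i.e.\ no redundant crossing.

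\textbf{The $1423$ step is only gestured at.} Your description of the bad configuration does not say where the pattern comes from. In the paper, the inserted pipe $j$ runs vertically only in its own entry column. So a double crossing with a pipe $k$ forces $j<k$ and puts the northeast crossing in that column. The southwest crossing is at some $(x,y)$ where, by the insertion rule, $(x-1,y)$ is a crossing or a marked bump. Take the pipe $\ell$ passing horizontally through $(x-1,y)$, chosen to exit as close to $j$ as possible; then $\ell jk$ is a $312$ pattern. Next, let $(x',y)$ be the bump at which $k$ turns into column $y$. The top-leftmost bump in rows $\le x'$, columns $\le y$ yields a pipe $i<j$ exiting above $\ell$, so $i\ell jk$ is a $1423$ pattern.

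Working with nonreduced pipe dreams, as you propose, would force this analysis through Demazure products. The marked reduced model reduces validity to ``no two pipes cross twice''.

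Finally, your sign-reversing involution is not an involution as stated, since it only inserts. It must also delete, e.g.\ act on the smallest entry of $w\setminus u$ that is either absent from $v$ or present with a removable pipe.
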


Taking $\beta=0$ in Theorem \ref{main-1}, we arrive at the following.

\begin{corollary}\label{thm:Schubert_main}
 Conjecture \ref{conj:mes} (and thus Conjecture \ref{conj:Gao}) is true for permutations avoiding the $1423$ pattern.
\end{corollary}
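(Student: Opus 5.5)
The corollary follows directly from Theorem \ref{main-1} by specializing $\beta=0$. Fix a permutation $w$ avoiding the $1423$ pattern and a subword $u$ of $w$. By Theorem \ref{main-1}, the polynomial
\[
F_{u,w}(\beta):=\sum_{u\leq v\leq w} (-1)^{|w|-|v|} \Upsilon_{\mathrm{perm}(v)}(\beta)
\]
lies in $\mathbb{Z}_{\geq 0}[\beta]$. Its constant term $F_{u,w}(0)$ is therefore a nonnegative integer.

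Next we identify $F_{u,w}(0)$. The Fomin--Kirillov $\beta$-Grothendieck polynomial satisfies $\mathfrak{G}^{(0)}_{x}(x)=\mathfrak{S}_{x}(x)$ for every permutation $x$. Specializing all $x_i=1$ gives $\Upsilon_{x}(0)=\Upsilon_{x}$. The sum defining $F_{u,w}$ is finite, and evaluation at $\beta=0$ is a ring homomorphism. Applying it term by term yields
\[
F_{u,w}(0)=\sum_{u\leq v\leq w}(-1)^{|w|-|v|}\Upsilon_{\mathrm{perm}(v)}.
\]
This is exactly the left-hand side of Conjecture \ref{conj:mes}. Hence Conjecture \ref{conj:mes} holds for $w$ and $u$.

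Conjecture \ref{conj:Gao} is the case $u=\emptyset$. By the Mészáros--Tanjaya inclusion–exclusion identity recalled above,
\[
c_w=\sum_{\emptyset\leq v\leq w}(-1)^{|w|-|v|}\Upsilon_{\mathrm{perm}(v)}=F_{\emptyset,w}(0)\geq 0
\]
whenever $w$ avoids $1423$.

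There is one subtlety. The expansion $\Upsilon_w=\sum_v c_v\,p_v(w)$ with nonnegative coefficients involves $c_v$ for patterns $v$ contained in $w$, not only for $w$ itself. If $w$ avoids $1423$, then so does every pattern $v$ occurring in $w$, so $c_v\geq 0$ for all $v$ with $p_v(w)>0$. Terms with $p_v(w)=0$ contribute nothing. Thus the nonnegative expansion holds for all $1423$-avoiding $w$. If Conjecture \ref{conj:Gao} is read as the statement $c_w\geq0$ alone, this remark is not needed.

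All the difficulty lies in Theorem \ref{main-1}. The only fact used here besides it is the standard specialization $\mathfrak{G}^{(0)}_w=\mathfrak{S}_w$.
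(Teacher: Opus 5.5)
Your proposal is correct and is the same argument as the paper's: set $\beta=0$ in Theorem \ref{main-1}, using $\Upsilon_w(0)=\Upsilon_w$, and take $u=\emptyset$ to recover Conjecture \ref{conj:Gao}. Your remark that every pattern of a $1423$-avoiding permutation is again $1423$-avoiding (so the ``equivalent'' expansion form also holds) is a valid point that the paper leaves implicit.
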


\begin{remark}
Recall that M\'esz\'aros and Tanjaya \cite{meszaros2022inclusion} proved Conjecture\ref{conj:mes} for permutations simultaneously  avoids the     $1423$   and    $1432$ patterns.  Our approach  leads to a new proof of their  result.  We will further discuss this in Section  \ref{last-sec}.
\end{remark}

In view of Conjecture \ref{conj:mes}
and Theorem \ref{main-1}, we are inspired  to make the following conjecture.  

\begin{conjecture}\label{1-7}
 For a permutation $w \in S_n$ and  a subword $u$ of $w$, we have
\[
 \sum_{u\le v\le w} (-1)^{|w|-|v|}\Upsilon_{\mathrm{perm}(v)}(\beta) \in \mathbb{Z}_{\geq 0}[\beta].
\]   
\end{conjecture}

We now have the following implications. 
 
\[
    \begin{tikzpicture}[
    >=stealth, 
    thick,     
    node font=\normalsize 
]

    \node (c17) at (0, 0)   {Conjecture \ref{1-7}};
    \node (c13) at (4, 1)   {Conjecture \ref{conj:mes} };
    \node (c12) at (4, -1)  {Conjecture \ref{Conj:Hugn}};
    \node (c11) at (8, 0)   {Conjecture  \ref{conj:Gao}};

    \draw[->] (c17) -- (c13);
    \draw[->] (c17) -- (c12);
    \draw[->] (c13) -- (c11);
    \draw[->] (c12) -- (c11);
\end{tikzpicture}
 \]

Interestingly, the constructions in our  proof lead   us to be aware of  the following phenomenon, which is easy to justify, but does not seem to appear in the literature. 

\vspace{10pt}
\noindent
\textbf{Inverse conservation principle.}
For any permutation  $w$, we have   \( c_w{(\beta)}= c_{w^{-1}}{(\beta)}\).

\begin{proof}
Suppose that $w\in S_n$. Let us use induction on $n$. The assertion  is clear when $n=0$. Now consider $n\geq 1$. Notice that 
\begin{align}
      c_{w^{-1}}{(\beta)}&= \Upsilon_{w^{-1}}{(\beta)}- \sum_{\text{$v\in S_m$ with $0\leq m<n$}} c_v{(\beta)}\cdot p_v({w^{-1}})\nonumber\\
   &= \Upsilon_{w^{-1}}{(\beta)}- \sum_{\text{$v\in S_m$ with $0\leq m<n$}} c_{v^{-1}}{(\beta)}\cdot p_{v^{-1}}({w^{-1}})\label{agj-0}
\end{align}
Proposition \ref{icin} tells that $\Upsilon_w{(\beta)}=\Upsilon_{w^{-1}}{(\beta)}$. Note also that $p_v(w)=p_{v^{-1}}({w^{-1}})$. Using induction, \eqref{agj-0} becomes 
\begin{align*}
      c_{w^{-1}}{(\beta)}
      =  \Upsilon_w{(\beta)}- \sum_{\text{$v\in S_m$ with $0\leq m<n$}} c_{v}{(\beta)}\cdot p_{v}({w})= c_{w}{(\beta)},
\end{align*}
as desired. 
\end{proof}

Combining  Theorem \ref{thm:Grothendieck_main}  with the   inverse conservation principle, we obtain a new class of permutations supporting Conjecture \ref{Conj:Hugn}.

\begin{corollary}\label{cor:Grothendieck_main}
 Conjecture \ref{Conj:Hugn} is true for any permutation $w$ avoiding the $1342$ pattern. 
\end{corollary}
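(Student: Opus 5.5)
The plan is to derive Corollary \ref{cor:Grothendieck_main} directly from Theorem \ref{thm:Grothendieck_main} together with the inverse conservation principle. Let $w$ avoid $1342$. By the inverse conservation principle, $c_w(\beta)=c_{w^{-1}}(\beta)$. It therefore suffices to show that $w^{-1}$ avoids $1423$. Theorem \ref{thm:Grothendieck_main} then gives $c_{w^{-1}}(\beta)\in\mathbb{Z}_{\geq 0}[\beta]$.

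The key step is the pattern-inversion fact $p_v(w)=p_{v^{-1}}(w^{-1})$, already used in the proof of the inverse conservation principle. To justify it, identify an occurrence of $v$ in $w$ with a set of points $\{(i_k,w(i_k))\}$ of the permutation matrix of $w$. Transposing the matrix swaps positions and values. This carries occurrences of $v$ in $w$ bijectively onto occurrences of $v^{-1}$ in $w^{-1}$.

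We then compute the inverse of the pattern. Since $1342$ sends $1\mapsto1$, $2\mapsto3$, $3\mapsto4$, $4\mapsto2$, its inverse sends $1\mapsto1$, $2\mapsto4$, $3\mapsto2$, $4\mapsto3$. That is, $1342^{-1}=1423$. Hence
\[
p_{1423}(w^{-1})=p_{1342}(w)=0,
\]
so $w^{-1}$ avoids $1423$.

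Combining the steps, $c_w(\beta)=c_{w^{-1}}(\beta)\in\mathbb{Z}_{\geq0}[\beta]$, which is exactly Conjecture \ref{Conj:Hugn} for $w$. There is no real obstacle here, since all the substance lies in Theorem \ref{thm:Grothendieck_main} and in Proposition \ref{icin}, the latter being used inside the inverse conservation principle. The only point needing care is computing the inverse of $1342$ correctly.

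This argument does not give the stronger statement of Theorem \ref{main-1} with a general subword $u$ for $1342$-avoiders. The inclusion–exclusion sum $\sum_{u\le v\le w}$ is transported under inversion to a sum over the corresponding point sets of $w^{-1}$, so the argument could in principle extend. However, the corollary only claims Conjecture \ref{Conj:Hugn}, so I would not pursue that extension.
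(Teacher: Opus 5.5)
Your proposal is correct and follows the paper's proof: the paper combines Theorem \ref{thm:Grothendieck_main} with the inverse conservation principle $c_w(\beta)=c_{w^{-1}}(\beta)$ and observes that $1342^{-1}=1423$. Your added justification that $p_v(w)=p_{v^{-1}}(w^{-1})$, via transposing the permutation matrix, fills in the one step the paper leaves implicit.
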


\begin{proof}
 Notice that the inverse  of $1423$ is $1342$. 
\end{proof}

This paper is arranged   as follows. In Section \ref{groth-99}, we review the pipe dream model for Grothendieck polynomials.   Section \ref{amin-p-0} is devoted to the proofs of our main theorems.
Some further remarks are given  in  Section  \ref{last-sec}.

\subsection*{Acknowledgements}
This work was  supported by the National Natural Science Foundation of China (No. 12371329) and the Fundamental Research Funds for the Central Universities (No. 63243072).

\section{Pipe dreams for Grothendieck polynomials}\label{groth-99}

In this section, we give an overview of the  pipe dream construction of  $\beta$-Grothendieck polynomials, see for example   \cite{fomin1994grothendieck, KNUTSON2004161,knutson2005grobner, lenart2006grothendieck} for further information. We shall  adopt the description by Lenart, Robinson and   Sottile   \cite{ lenart2006grothendieck} which avoids interpreting the $0$-Hecke algebra product. 

\subsection{Divided difference operators}

For a permutation $w\in S_n$ and a polynomial $f\in  \mathbb{Z}[\beta][\mathbf{x}]=\mathbb{Z}[\beta][x_1,\ldots, x_n]$, $w$ acts on $f$ by 
\[
w\cdot f=f(x_{w(1)},\ldots, x_{w(n)}). 
\]
Define the  divided difference operator  $\partial_i$ acting on $\mathbb{Z}[\beta][\mathbf{x}]$ by
\[
\partial_i(f)=\frac{f- s_i\cdot f}{x_i-x_{i+1}},
\]
where $s_i$ is the simple transposition swapping $i$ and $i+1$. 

Recall that the length $\ell(w)$ of a permutation $w$ is the minimum number of simple transpositions that can be used to generate $w$. It admits a combinatorial description 
\[
\ell(w)=|\{(i,j)\colon 1\leq i<j\leq n,\, w(i)>w(j)\}|.
\]
Let $w_0=n\cdots 2 1$ be the  permutation in $S_n$ with longest length. Set \[\mathfrak{G}_{w_0}^{(\beta)}(x)=x_1^{n-1}x_2^{n-2}\cdots x_{n-1}.\]
If $w\neq w_0$, then there must exist an index $1\leq i<n$ such that $w(i)<w(i+1)$. Let $ws_i$ be the permutation obtained from $w$ by swapping $w(i)$ and $w(i+1)$. Note that $\ell(ws_i)=\ell(w)+1$.    Define 
\[
\mathfrak{G}_{w}^{(\beta)}(x)=\pi_i \,\mathfrak{G}_{ws_i}^{(\beta)}(x), 
\]
where $\pi_i=\partial_i\,(1+\beta x_{i+1})$. 
The above definition is independent of the choice of $i$ since the operators $\pi_i$ satisfy the Coxeter  relations
\[
\pi_i\,\pi_j=\pi_j\,\pi_i\ \ \ \text{for $|i-j|>1$}, \ \ \ \ \pi_i\pi_{i+1}\pi_i=\pi_{i+1}\pi_i\pi_{i+1}. 
\]

\subsection{Pipe dreams}

A pipe dream of rank $n$ is a tiling with $n+1-i$ left-justified tiles in
row $i$ by using two types of tiles: the bumping tile $\BPD{\B}$ and the crossing tile $\BPD{\X}$, with the constraint that each tile lying on the southeast  diagonal is $\BPD{\B}$. Intuitively, each pipe dream has $n$ pipes, which are  labeled $1,2,\ldots, n$ from left to right, and these pipes  enter   from the top boundary, going down and to the left,  and eventually  exit from the left boundary. We shall use $(i,j)$ to denote the tile in row $i$ and column $j$. As usual, we use matrix coordinates to index rows and columns, that is,    rows are counted from top to bottom, and columns are counted from left to right.

A pipe dream is  reduced if  any two pipes can cross at most once. 
For a reduced pipe dream, the associated permutation can be obtained by  reading the labels of the pipes along the left boundary from top to bottom.  
 Let $\operatorname{RPD}(w)$ denote the set of all reduced pipe dreams of $w$. For example, in Figure \ref{fig:nonreduced_pipe_dreams_of_2143}, the first three pipe  dreams on the top row  constitute all reduced pipe dreams of $2143$. As will be seen later, the set of reduced pipe dreams of $w$ provides a combinatorial model for the Schubert polynomial $\mathfrak{S}_w(x)$. 
 
 Concerning  Grothendieck polynomials, we need to consider general pipe dreams \cite{fomin1994grothendieck, KNUTSON2004161,knutson2005grobner, lenart2006grothendieck}. Here we adopt the description in \cite{ lenart2006grothendieck} which avoids using  the $0$-Hecke algebra language to the maximum extent.  
 Let $P\in \mathrm{RPD}(w)$ be a reduced pipe dream of $w$. For each bumping tile $\BPD{\B}$, say   $(i,j)$, in $P$, one may possibly  mark the pipe pieces  within  it with  colors if the two pipes traversing it cross
at some position  above row $i$. Such a marked bumping tile will be denoted   \(\BPD{\MB}\). The reduced pipe dream $P$, together with some marked bumping tiles, is called a marked reduced pipe dream of $w$. Let $\mathrm{MRPD}(w)$  stand for the set of all marked reduced pipe dreams of $P$. 
 For example,  Figure \ref{fig:nonreduced_pipe_dreams_of_2143} lists all marked reduced pipe dreams of $2143$. 
\begin{figure}[h t]
    \centering
\[\BPD[1.5pc]{
\M{}\M{1}\M{2}\M{3}\M{4}\\   
\M{2}\X\B\X\B \\
\M{1}\B\B\B \\
\M{4}\B\B \\
\M{3}\B \\
}\BPD[1.5pc]{
\M{}\M{1}\M{2}\M{3}\M{4}\\   
\M{2}\X\B\B\B \\
\M{1}\B\X\B \\
\M{4}\B\B \\
\M{3}\B \\
}\BPD[1.5pc]{
\M{}\M{1}\M{2}\M{3}\M{4}\\   
\M{2}\X\B\B\B \\
\M{1}\B\B\B \\
\M{4}\X\B \\
\M{3}\B \\
}\BPD[1.5pc]{
\M{}\M{1}\M{2}\M{3}\M{4}\\   
\M{2}\X\B\X\B \\
\M{1}\B\MB\B \\
\M{4}\B\B \\
\M{3}\B \\
}\]
\[\BPD[1.5pc]{
\M{}\M{1}\M{2}\M{3}\M{4}\\   
\M{2}\X\B\B\B \\
\M{1}\B\X\B \\
\M{4}\MB\B \\
\M{3}\B \\
}\BPD[1.5pc]{
\M{}\M{1}\M{2}\M{3}\M{4}\\   
\M{2}\X\B\X\B \\
\M{1}\B\B\B \\
\M{4}\MB\B \\
\M{3}\B \\
}\BPD[1.5pc]{
\M{}\M{1}\M{2}\M{3}\M{4}\\   
\M{2}\X\B\X\B \\
\M{1}\B\MB\B \\
\M{4}\MB\B \\
\M{3}\B \\
}
\]
\caption{All   marked reduced pipe dreams of $2143$, among which the first three on the top row are the ordinary reduced pipe dreams. }
\label{fig:nonreduced_pipe_dreams_of_2143}
\end{figure}

For  $P\in \mathrm{MRPD}(w)$, denote
\[
x^P= \prod_{(i,j)} x_i,
\]
where $(i,j)$  ranges over all   crossing tiles as well as marked bumping tiles  in $P$. The following expression  for $\beta$-Grothendieck polynomials was due to Fomin and Kirillov \cite{fomin1994grothendieck}, which has been further investigated for example in   \cite{KNUTSON2004161,knutson2005grobner, lenart2006grothendieck} from the perspectives of geometry and commutative algebra.

\begin{theorem}\label{thm:beta_Gro}
For any permutation $w\in S_n$,
\[
    \mathfrak{G}_w^{(\beta)}(x) = \sum_{P\in \operatorname{MRPD}(w)} \beta^{   \mathrm{mbt}(P)}x^{P},
\]
where $\mathrm{mbt}(P) $ is the  number of marked bumping tiles of $P$. 
\end{theorem}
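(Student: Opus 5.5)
Since $\mathfrak{G}_w^{(\beta)}(x)$ is defined through the recursion $\mathfrak{G}_w^{(\beta)}=\pi_i\,\mathfrak{G}_{ws_i}^{(\beta)}$ starting from $\mathfrak{G}_{w_0}^{(\beta)}=x_1^{n-1}\cdots x_{n-1}$, I would prove Theorem~\ref{thm:beta_Gro} by downward induction on $\ell(w)$ inside $S_n$. Write $G_w:=\sum_{P\in\mathrm{MRPD}(w)}\beta^{\mathrm{mbt}(P)}x^P$. There are two things to show. The base case $G_{w_0}=x_1^{n-1}x_2^{n-2}\cdots x_{n-1}$ is immediate: $w_0$ has a unique reduced pipe dream, with all non-diagonal tiles crossings. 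Every pair of pipes crosses in it, but no bumping tile can be marked, because the only bumping tiles are on the diagonal and the two pipe pieces passing through such a tile belong to a single pipe. The main work is the inductive identity $\pi_i G_{ws_i}=G_w$ whenever $w(i)<w(i+1)$.

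To verify this identity I would localize to rows $i$ and $i+1$. Since $\pi_i$ is linear over polynomials symmetric in $x_i,x_{i+1}$, I would group the marked reduced pipe dreams of $ws_i$ according to their configuration outside rows $i,i+1$. Within each group, the contribution of rows $i,i+1$ should factor as a polynomial in $x_i,x_{i+1}$ that is expressible through the pieces $x_i^ax_{i+1}^b$ and $(1+\beta x_{i+1})$. Then I need the elementary formulas
\[
\pi_i(x_i^{a}x_{i+1}^{b}\,f)\quad\text{for } f \text{ symmetric in } x_i,x_{i+1},
\]
namely $\pi_i(x_i)=1+\beta x_{i+1}$ and $\pi_i(1)=-\beta$. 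With these, the action of $\pi_i$ can be matched against the marked pipe dreams of $w$ in the same group.

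The combinatorial model I would use for this matching is the Bergeron--Billey style ladder and chute moves. In rows $i,i+1$, the pipes labeled $w(i),w(i+1)$ exit at rows $i,i+1$, and passing from $ws_i$ to $w$ removes their crossing. I expect the main obstacle to be the following: markings on bumping tiles in rows $i,i+1$ depend on whether the two pipes crossed higher up, and this interacts nontrivially with the local move.

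Because of that obstacle, my first choice would be to avoid the direct verification. Instead I would cite the equivalence with the Fomin--Kirillov $0$-Hecke formula. Fomin--Kirillov prove $\mathfrak{G}_w^{(\beta)}=\sum\beta^{\,|P|-\ell(w)}x^P$ over all (possibly nonreduced) pipe dreams $P$ whose Demazure (0-Hecke) product is $w$. The remaining step is then a weight-preserving bijection between such nonreduced pipe dreams and $\mathrm{MRPD}(w)$: a marked bumping tile is replaced by a crossing tile. The condition that the two pipes already crossed above row $i$ is exactly the condition that this extra crossing is redundant in the Demazure product when reading the word in the standard order (right to left along rows, top to bottom). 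The inverse map reads tiles in that order and turns each redundant crossing back into a marked bump. This is the argument of Lenart--Robinson--Sottile, and checking that redundancy matches "crossed above" is the one careful step.
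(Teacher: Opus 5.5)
Your final route is the paper's: cite the Fomin--Kirillov formula over nonreduced pipe dreams with Demazure product $w$, then turn each redundant crossing into a marked bumping tile. Redundancy in the reading order matches ``crossed above row $i$'' because the pipe entering a tile from the top never visits tiles further right in that row; the paper does not prove the theorem but attributes it to Fomin--Kirillov in the Lenart--Robinson--Sottile formulation. One slip in the sketch you discarded: with $\pi_i=\partial_i(1+\beta x_{i+1})$ one has $\pi_i(x_i)=1$, not $1+\beta x_{i+1}$, while $\pi_i(1)=-\beta$ is correct.
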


When $\beta=0$, the contribution from  pipe dreams with marked bumping tiles vanishes, leading to the reduced pipe dream  formula for Schubert polynomials:
\[
    \mathfrak{S}_w(x) = \sum_{P\in \operatorname{RPD}(w)} x^{P}.
\]

\begin{corollary}
  For any permutation $w\in S_n$,
  \[
 \Upsilon_w{(\beta)}= \sum_{P\in \operatorname{MRPD}(w)} \beta^{ \mathrm{mbt}(P)}.
  \]
\end{corollary}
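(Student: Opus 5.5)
This follows by setting every $x_i=1$ in Theorem \ref{thm:beta_Gro}. By definition, $\Upsilon_w(\beta)=\mathfrak{G}_w^{(\beta)}(x)|_{x_i=1}$, and Theorem \ref{thm:beta_Gro} expresses $\mathfrak{G}_w^{(\beta)}(x)$ as the finite sum
\[
\sum_{P\in \operatorname{MRPD}(w)} \beta^{\mathrm{mbt}(P)}x^{P}.
\]
Specialization $x_i\mapsto 1$ is a ring homomorphism $\mathbb{Z}[\beta][\mathbf{x}]\to\mathbb{Z}[\beta]$, so we may apply it term by term.

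For each $P$, the monomial $x^P=\prod_{(i,j)}x_i$ runs over the crossing tiles and marked bumping tiles of $P$. It therefore becomes a product of $1$'s, which equals $1$. The coefficient $\beta^{\mathrm{mbt}(P)}$ does not involve the $x_i$ and is unchanged. Summing over $\operatorname{MRPD}(w)$ gives the stated identity
\[
\Upsilon_w(\beta)=\sum_{P\in \operatorname{MRPD}(w)} \beta^{\mathrm{mbt}(P)}.
\]

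There is no real obstacle. The only point to check is that $\operatorname{MRPD}(w)$ is finite, which holds because there are finitely many tilings of the staircase of rank $n$ and finitely many markings of each, so the substitution is legitimate.
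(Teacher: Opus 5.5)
Your proof is correct and is the same argument the paper relies on: the paper gives no separate proof, since the corollary is just Theorem \ref{thm:beta_Gro} specialized at $x_i=1$, where each monomial $x^P$ becomes $1$ and only the weight $\beta^{\mathrm{mbt}(P)}$ remains.
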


Consider  \(w=2143\). From Figure \ref{fig:nonreduced_pipe_dreams_of_2143}, it follows that   
\[
\mathfrak{G}_w^{(\beta)}(x) =\left(x_{1}^{2}+x_{1}x_{2}+x_{1}x_{3}\right)+\beta\left(x_{1}^{2}x_{2}+x_{1}^{2}x_{3}+x_{1}x_{2}x_{3} \right)+\beta^{2}\left(x_{1}^{2}x_{2}x_{3}\right),\]
and 
\[
\Upsilon_w{(\beta)}=3+3\beta+\beta^2.
\]

The following relationship is needed  in the proof of the ``inverse conservation principle'', as already exhibited  in the introduction. 

\begin{proposition}\label{icin}
  For any permutation $w\in S_n$,
  \[
 \Upsilon_w{(\beta)}= \Upsilon_{w^{-1}}{(\beta)}. 
 \]
\end{proposition}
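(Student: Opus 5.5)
The natural approach is to exhibit a weight-preserving bijection $\mathrm{MRPD}(w)\to\mathrm{MRPD}(w^{-1})$, given by transposing a marked reduced pipe dream across the main diagonal. If $P$ is a reduced pipe dream of $w$, its reflection $P^{t}$, with tile $(i,j)$ sent to $(j,i)$, is again a tiling of the staircase shape: row $i$ has $n+1-i$ tiles exactly when column $i$ does. Bumping tiles stay on the southeast diagonal. In $P^{t}$ the roles of the top and left boundaries are exchanged, so the pipe entering at column $k$ and exiting at row $w^{-1}(k)$ in $P$ becomes a pipe entering at column $w^{-1}(k)$ and exiting at row $k$. For reduced pipe dreams this is the classical fact $\mathrm{RPD}(w)^{t}=\mathrm{RPD}(w^{-1})$ (Bergeron--Billey). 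Reducedness is preserved, since two pipes cross at most once in $P$ if and only if they do so in $P^{t}$.

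The next step is to transport the markings. A bumping tile $(i,j)$ of $P$ may be marked when the two pipes through it have already crossed above row $i$. This condition is not obviously symmetric under transposition, where it would become "crossed to the left of column $i$". I would therefore argue as follows. In a reduced pipe dream, each pair of pipes meets at a unique crossing, if at all. Following the two pipes traversing a bumping tile $(i,j)$ backwards toward their entry points on the top boundary, they travel northeast. Their crossing tile, if it exists, lies weakly northeast of $(i,j)$: strictly above row $i$ and strictly to the right of column $j$. Since the pipes move only down and left, a crossing "above row $i$" is automatically "to the right of column $j$". The condition is thus equivalent to the pipes having crossed before reaching $(i,j)$ along their paths, which is a transposition-symmetric statement.

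Consequently, marked bumping tiles of $P$ correspond exactly to markable bumping tiles of $P^{t}$, so $P\mapsto P^{t}$ is a bijection $\mathrm{MRPD}(w)\to \mathrm{MRPD}(w^{-1})$ preserving $\mathrm{mbt}$. Summing $\beta^{\mathrm{mbt}(P)}$ and applying the preceding corollary gives $\Upsilon_w(\beta)=\Upsilon_{w^{-1}}(\beta)$.

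The main obstacle is making the marking condition precise. One must check that "cross at some position above row $i$" genuinely means the pipes cross before reaching the tile, rather than below it. In a reduced pipe dream two pipes meeting at a bump tile either crossed earlier or will cross later, and the marking records the former. This reading agrees with the Lenart--Robinson--Sottile convention and is symmetric under transposition. If this geometric argument proved delicate, the fallback is algebraic. One would use the known identity $\mathfrak{G}^{(\beta)}_{w^{-1}}(x;y)=\mathfrak{G}^{(\beta)}_w(y;x)$ for double $\beta$-Grothendieck polynomials and specialize $x=y=\mathbf{1}$, or observe directly that $w\mapsto w^{-1}$ exchanges left and right $0$-Hecke words, so reversing words in the Fomin--Kirillov formula gives a weight-preserving bijection.
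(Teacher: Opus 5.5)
\textbf{There is a genuine gap.} Your key step fails: the condition ``the two pipes crossed before reaching the bump along their paths'' is \emph{not} invariant under transposition. Transposition swaps the top and left boundaries. So each pipe of $P^{t}$, read in the standard orientation (in at the top, out at the left), is the corresponding pipe of $P$ run backwards, and ``before'' becomes ``after''. Your first observation was the right one. In $P^{t}$ the condition reads ``crossed to the left of the bump''. By your own down-left argument this means the pipes cross \emph{after} the bump, i.e.\ southwest of it. A crossing at $(i',j')$ with $i'<i$, $j'>j$ goes to $(j',i')$, which is strictly southwest of $(j,i)$.

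Concretely, take the fourth diagram of Figure~\ref{fig:nonreduced_pipe_dreams_of_2143}. This is $P\in\mathrm{MRPD}(2143)$ with crossings at $(1,1),(1,3)$ and a marked bump at $(2,2)$. The mark is legal, since pipes $3$ and $4$ pass through $(2,2)$ and crossed at $(1,3)$. Its transpose has crossings at $(1,1),(3,1)$ and the mark at $(2,2)$. Pipes $3$ and $4$ still pass through $(2,2)$, but their only crossing is now $(3,1)$, below row $2$. So $P^{t}\notin\mathrm{MRPD}(2143^{-1})$, and indeed it is not among the seven diagrams listed there. Thus $P\mapsto P^{t}$ maps $\mathrm{MRPD}(w)$ bijectively onto the \emph{left-marked} RC-graphs of $w^{-1}$ (a bump may be marked iff its pipes cross southwest of it), not onto $\mathrm{MRPD}(w^{-1})$.

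The missing ingredient is exactly what the paper cites: the dual model of Lenart--Robinson--Sottile (their Section~6). This is the fact that left-marked RC-graphs of a permutation have the same $\beta$-weighted count as the ordinary marked ones. With it, your transposition argument becomes the paper's proof.

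Your second fallback, made precise, is a sound self-contained route. Both marked models are in weight-preserving bijection with the non-reduced pipe dreams whose Demazure product is $w$: turn marked bumps into crossings, and conversely extract the reduced subword greedily from the front, resp.\ the back, of the reading word. Better still, transpose those non-reduced pipe dreams directly. Transposition reverses the reading word up to commutations, and the Demazure product of the reversed word is the inverse. That proves the proposition with no markings at all.

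Your double-Grothendieck fallback, however, specializes at the wrong point. In the additive convention your symmetry uses, $\Upsilon_w(\beta)$ is the value at $x=\mathbf 1$, $y=\mathbf 0$, not at $x=y=\mathbf 1$. So you would additionally need $\mathfrak{G}^{(\beta)}_w(\mathbf 0;\mathbf 1)=\mathfrak{G}^{(\beta)}_w(\mathbf 1;\mathbf 0)$.
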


\begin{proof}
There is a dual version of marked reduced   pipe dreams, see \cite[Section 6]{lenart2006grothendieck} where they are called   left marked RC-graphs.  That is, a bumping tile may be marked if the two pipes   have already crossed at some position southwest of it. 
So,  transposing pipe dreams along the main diagonal gives a bijection between the marked reduced pipe dreams of $w$ and $w^{-1}$. This completes  the proof.    
\end{proof}

\section{Proofs of   Theorems \ref{thm:Grothendieck_main} and  \ref{main-1}}\label{amin-p-0}

In this section, we shall give proofs of Theorems \ref{thm:Grothendieck_main} and  \ref{main-1}. 
The starting point is to establish a reduction   algorithm, transforming  a marked reduced pipe dream of a permutation injectively into a smaller marked reduced pipe dream of its subword. To do this, we first distinguish  pipes that can be erased from a marked  pipe dream. 

\subsection{Removable pipes and core pipe dreams}

Let $w\in S_n$ and  $P$ be a marked reduced  pipe dream of   $w$.
We say that a pipe in $P$ with label $j$   is removable if the following conditions are satisfied 
\begin{enumerate}
    \item [(i)]  In the column where pipe $j$ enters, all \(\BPD{\X}\)'s    are traversed by  pipe $j$, and there is no marked bumping tile \(\BPD{\MB}\). In other words, this column  is formed (from top to bottom) by some \(\BPD{\X}\)'s,  followed by some $\BPD{\B} $'s.

    \item [(ii)] For each crossing tile \(\BPD{\X}\) traversed by pipe $j$, the tile  directly above it  (if any) cannot be an unmarked  bumping tile  $\BPD{\B} $.

    \item [(iii)] 
    There is no marked bumping tile \(\BPD{\MB}\) traversed by pipe $j$. 
\end{enumerate}
Otherwise, we say that pipe $j$ is non-removable.
In Figure \ref{fig:Two_non_reduced_pipe_dreams_of_132564}, we illustrate two marked reduced pipe dreams of \(w=132564\). The labels of all removable pipes are encircled by  colored boxes.
\begin{figure}[h]
    \centering

    \[
\BPD[1.5pc]{
\M{}\M{\fcolorbox{red}{yellow}{1}}\M{2}\M{\fcolorbox{red}{yellow}{3}}\M{4}\M{5}\M{\fcolorbox{red}{yellow}{6}}\\   
\M{1}\B\X\B\B\B\B \\
\M{3}\B\B\B\B\B \\
\M{2}\B\X\B\B \\
\M{5}\B\X\B \\
\M{6}\B\B \\
\M{4}\B \\
}
\BPD[1.5pc]{
\M{}\M{\fcolorbox{red}{yellow}{1}}\M{2}\M{3}\M{4}\M{5}\M{\fcolorbox{red}{yellow}{6}}\\   
\M{1}\B\X\B\B\B\B \\
\M{3}\B\B\X\B\B \\
\M{2}\B\MB\B\B \\
\M{5}\B\X\B \\
\M{6}\B\B \\
\M{4}\B \\
}
\]
    
    \caption{Illustration of removable pipes.}
    \label{fig:Two_non_reduced_pipe_dreams_of_132564}
\end{figure}

A marked reduced  pipe dream   is called a core  if none of its pipes  is removable.
The name  is inspired by the notion of a core partition in representation theory \cite{JK} (recall that a partition is a $t$-core if one  cannot remove any $t$-rim hook from it). 
Let $\mathrm{CMRPD}(w)$ be the set of core  marked reduced pipe dreams of $w$, and $\mathrm{CRPD}(w)$ be the subset  consisting of those  without marked bumping tiles. For example, as illustrated in Figure \ref{fig:nonreduced_pipe_dreams_of_2143-23}, the permutation $2143$ has two core marked reduced  pipe dreams, both of which have marked bumping tiles.
\begin{figure}[h t]
    \centering
\[\BPD[1.5pc]{
\M{}\M{\fcolorbox{red}{yellow}{1}}\M{\fcolorbox{red}{yellow}{2}}\M{\fcolorbox{red}{yellow}{3}}\M{\fcolorbox{red}{yellow}{4}}\\   
\M{2}\X\B\X\B \\
\M{1}\B\B\B \\
\M{4}\B\B \\
\M{3}\B \\
}\BPD[1.5pc]{
\M{}\M{\fcolorbox{red}{yellow}{1}}\M{2}\M{3}\M{4}\\   
\M{2}\X\B\B\B \\
\M{1}\B\X\B \\
\M{4}\B\B \\
\M{3}\B \\
}\BPD[1.5pc]{
\M{}\M{1}\M{\fcolorbox{red}{yellow}{2}}\M{3}\M{4}\\   
\M{2}\X\B\B\B \\
\M{1}\B\B\B \\
\M{4}\X\B \\
\M{3}\B \\
}\BPD[1.5pc]{
\M{}\M{\fcolorbox{red}{yellow}{1}}\M{2}\M{3}\M{4}\\   
\M{2}\X\B\X\B \\
\M{1}\B\MB\B \\
\M{4}\B\B \\
\M{3}\B \\
}\]
\[\BPD[1.5pc]{
\M{}\M{1}\M{2}\M{3}\M{4}\\   
\M{2}\X\B\B\B \\
\M{1}\B\X\B \\
\M{4}\MB\B \\
\M{3}\B \\
}\BPD[1.5pc]{
\M{}\M{1}\M{\fcolorbox{red}{yellow}{2}}\M{3}\M{4}\\   
\M{2}\X\B\X\B \\
\M{1}\B\B\B \\
\M{4}\MB\B \\
\M{3}\B \\
}\BPD[1.5pc]{
\M{}\M{1}\M{2}\M{3}\M{4}\\   
\M{2}\X\B\X\B \\
\M{1}\B\MB\B \\
\M{4}\MB\B \\
\M{3}\B \\
}
\]
\caption{All marked reduced pipe dreams of $2143$. Furthermore, the first and third  on the bottom row are core pipe dreams. }
\label{fig:nonreduced_pipe_dreams_of_2143-23}
\end{figure}

\subsection{The reduction  algorithm}

 Let $w\in S_n$ and $P\in \mathrm{MRPD}(w)$ be a  marked reduced pipe dream of $w$. We shall introduce  an operation mapping   $P$ to a marked reduced pipe dream $P'$ of a subword $v$ of $w$. Here a  marked reduced pipe dream of the subword $v$ is defined in the same way as for a permutation, except by replacing the pipe labels by the entries of $v$.  Hence if we read the labels on the left boundary of $P'$ from top to bottom,  then we obtain the subword $v$. For example, Figure \ref{fig:Pipe_Dream_of_subword} depicts  a  marked reduced pipe dream of the subword $v=2563$. 
 \begin{figure}[h t]
     \centering
     \[\BPD[1.5pc]{
    \M{}\M{2}\M{3}\M{5}\M{6}\\   
    \M{2}\B\X\B\B \\
    \M{5}\MB\B\B \\
    \M{6}\X\B \\
    \M{3}\B \\
    }\]
     \caption{A  marked reduced pipe dream of the subword $v=2563$.}
     \label{fig:Pipe_Dream_of_subword}
 \end{figure}

Now fix  $P\in \mathrm{MRPD}(w)$, and assume that  pipe $j$ is a removable pipe of $P$. Let $v$ be the subword of $w$ by deleting the entry $j$. We shall define a map
\[
\Phi_{j}\colon \mathrm{MRPD}(w)\longrightarrow\mathrm{MRPD}(v)
\]by performing two  operations: deletion  and merging.

\begin{itemize}
    \item[]  {\bf Deletion.} 
Let $C_j$ denote the column of $P$ where pipe $j$ enters.     This operation will be performed on the columns including $C_j$ as well as those to its left, which are exactly the columns traversed by pipe $j$. 
    \begin{itemize}
        \item[(1)] Delete the whole column $C_j$.

        \item[(2)] For any fixed  column strictly to the left of $C_j$, consider the tiles   in this column  traversed by pipe $j$.  By the conditions (ii) and (iii) in the definition of a removable pipe, it is readily checked that the tiles traversed by pipe $j$ are 
        \begin{itemize}
            \item[$\bullet$]  either a single crossing tile $\BPD{\X}$ where pipe $j$ traverses horizontally,
            
            \item[$\bullet$] or two consecutive  bumping tiles $\BPD{\delBDown\\ \delBUP\\}$ where the pipe piece  between the two red diagonals belongs to pipe $j$. 
        \end{itemize}
         In the first case, delete the tile $\BPD{\X}$. In the latter case, delete the two `semi-tiles' $\BPD{\delBDown\\ \delBUP\\}$ lying between the two red  diagonal  lines. 
    \end{itemize}
For an illustration of the deletion procedure, see the first two steps in Figure \ref{fig:the_operation_of_Phi_{w,i,u}},  where the deleted areas are encircled by red lines. 
\end{itemize}

\begin{itemize}
    \item[]  {\bf Merging.}  After performing this operation, we will get a new pipe diagram whose rank is decreased by $1$. To be specific,
    
    \begin{itemize}
        \item[(1)]  Shift all the tiles, which lie strictly to the right of   $C_j$, leftward  by one unit. 

        \item[(2)] Shift all the tiles as well as some possible `semi-tiles' in the columns to the left of $C_j$, which lie below pipe $j$, upward by one unit. 
    \end{itemize}
See the last step in Figure \ref{fig:the_operation_of_Phi_{w,i,u}} for an illustration. 
\end{itemize}

\vspace{-10pt}
\begin{figure}[h t]
    \centering
    \[
    \begin{aligned}
    &\BPD[1.5pc]{
        \M{}\M{1}\M{2}\M{3}\M{\fcolorbox{red}{yellow}{4}}\M{\fcolorbox{red}{yellow}{5}}\M{6}\M{7}\M{8}\M{9}\\   
        \M{2}\X\B\X\X\B\B\B\B\B \\
        \M{5}\X\B\X\X\B\B\X\B \\
        \M{1}\B\MB\B\X\B\MB\B \\
        \M{7}\X\X\B\B\B\B \\
        \M{6}\B\X\B\B\B \\
        \M{4}\B\B\B\B \\
        \M{3}\B\MB\B \\
        \M{9}\B\B\\
        \M{8}\B\\
    }
    \longrightarrow
    \BPD[1.5pc]{
        \M{}\M{1}\M{2}\M{3}\M{\fcolorbox{red}{yellow}{4}}\M{\fcolorbox{red}{yellow}{5}}\M{6}\M{7}\M{8}\M{9}\\   
        \M{2}\X\B\X\delX\B\B\B\B\B \\
        \M{5}\X\B\X\delX\B\B\X\B \\
        \M{1}\B\MB\B\delX\B\MB\B \\
        \M{7}\X\X\delBDown\delB\B\B \\
        \M{6}\delBDown\delX\delBUP\delB\B \\
        \M{4}\delBUP\B\B\delB \\
        \M{3}\B\MB\B \\
        \M{9}\B\B\\
        \M{8}\B\\
    }
    \longrightarrow \\
    &\BPD[1.5pc]{
        \M{}\M{1}\M{2}\M{3}\M{\fcolorbox{red}{yellow}{4}}\M{\fcolorbox{red}{yellow}{5}}\M{6}\M{7}\M{8}\M{9}\\   
        \M{2}\X\B\X\delO\B\B\B\B\B \\
        \M{5}\X\B\X\delO\B\B\X\B \\
        \M{1}\B\MB\B\delO\B\MB\B \\
        \M{7}\X\X\delJJ\delO\B\B \\
        \M{6}\delJJ\delO\delFF\delO\B \\
        \M{4}\delFF\B\B\delO\\
        \M{3}\B\MB\B \\
        \M{9}\B\B\\
        \M{8}\B\\
    }
    \longrightarrow
    \BPD[1.5pc]{
        \M{}\M{1}\M{2}\M{3}\M{\fcolorbox{red}{yellow}{5}}\M{6}\M{7}\M{8}\M{9}\\   
        \M{2}\X\B\X\B\B\B\B\B  \\
        \M{5}\X\B\X\B\B\X\B  \\
        \M{1}\B\MB\B\B\MB\B \\
        \M{7}\X\X\B\B\B  \\
        \M{6}\B\B\B\B \\
        \M{3}\B\MB\B \\
        \M{9}\B\B\\
        \M{8}\B\\
    }
    \end{aligned}
    \]

    \caption{Illustration of the map $\Phi_j$.}
    \label{fig:the_operation_of_Phi_{w,i,u}}
\end{figure}

The  resulting pipe diagram after the deletion/merging algorithm is defined as $\Phi_j(P)$. The following  proposition shows   that $\Phi_j$ is indeed a map from $\mathrm{MRPD}(w)$ to $\mathrm{MRPD}(v)$.

\begin{proposition}\label{lem:well_defined}
Assume that  $P\in \mathrm{MRPD}(w)$, and pipe $j$ is a removable pipe of $P$. Let $v$ be the subword of $w$ by deleting the entry $j$. Then $\Phi_j(P)$ is a marked reduced pipe dream of the subword $v$.
\end{proposition}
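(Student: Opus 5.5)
We need to check three things about $P'=\Phi_j(P)$: that it is a well-formed staircase tiling of rank $n-1$ with bumping tiles on the diagonal, that the pipes of $P'$ are exactly the pipes of $P$ other than $j$ and exit in the order given by $v$, and that $P'$ is reduced with every marked tile still legally marked. The plan is to show that deletion and merging is a local surgery that erases pipe $j$ and leaves the routing of every other pipe unchanged.

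\textbf{Shape.} Suppose pipe $j$ enters column $C_j=c$. Pipe $j$ moves weakly southwest, and in each column $k<c$ it meets either one crossing tile or a vertical pair of bumping tiles. Condition (i) lets us delete column $c$ outright: its crossings all involve pipe $j$, and its lower part is a run of bumping tiles. In each column $k<c$ we then remove exactly one unit of height and shift the part below pipe $j$ up by one. After deleting column $c$ and shifting the columns to its right left by one, column $k$ has lost exactly one cell. The resulting rows are therefore left-justified of lengths $n-1,\dots,1$. The diagonal condition follows because the last tile of each row is still a bumping tile; this needs a short check at the row where pipe $j$ exits.

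\textbf{Local gluing.} The key point is that the cut is compatible with pipe connectivity. For a deleted crossing tile, pipe $j$ passes horizontally and the other pipe passes vertically. The pieces above and below the deleted tile glue into one vertical strand once the lower part is shifted up. For the two-semi-tile case, the lower-left elbow of the upper tile and the upper-right elbow of the lower tile are joined by the piece of pipe $j$ between the diagonals. Once that piece is removed, the remaining halves, namely the upper-left elbow of the upper tile and the lower-right elbow of the lower tile, combine into a single bumping tile.

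I expect this gluing to be the main obstacle. One has to check carefully, using conditions (ii) and (iii), that the boundary halves match up. Condition (ii) forbids an unmarked bumping tile directly above a crossing of pipe $j$. That configuration would produce a pipe with the wrong direction after the shift, or leave an unfilled half-tile. The check is a small case analysis on the tile above and the tile below each cut. The pieces of column $c$ must also be handled: the pipes crossing pipe $j$ there continue horizontally, and deleting the column splices their left and right horizontal pieces together.

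\textbf{Permutation, reducedness, markings.} The surgery deletes only pieces of pipe $j$, so every other pipe keeps its sequence of crossings and bumps with the pipes other than $j$. It follows that the left-boundary labels read $v$, and that no two pipes cross twice because $P$ was reduced. Condition (iii) ensures that no marked bumping tile involves pipe $j$. Every other marked tile in $P'$ is an image of a marked tile in $P$ with the same two pipes. Their earlier crossing was not in a deleted tile, so it survives and still lies in a row above the marked tile. Here we must note that the shift moves tiles up by at most one and preserves the vertical order of tiles within a column and across the cut. Moreover, column $c$ had no marks by (i), so no marks are lost or created. Hence $P'\in\mathrm{MRPD}(v)$.
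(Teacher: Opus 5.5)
Your strategy is the same as the paper's: show that deletion and merging leave the route of every pipe other than $j$ intact, then read off the shape, the word $v$, reducedness and the markings. However, the verification you call the main obstacle is never carried out, and your sketch of it misses the case that actually needs checking.

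Deleting $C_j$ does \emph{not} remove only pieces of pipe $j$. Suppose pipe $j$ goes down through the crossings in rows $1,\dots,t$ of $C_j$ and turns left at $(t+1,C_j)$. By (i), the tiles of $C_j$ from row $t+1$ down are bumping tiles carrying pieces of other pipes. A strand entering $(r,C_j)$ from the right (with $r\ge t+1$) turns down and leaves $C_j$ to the left one row lower, at row $r+1$. Your splicing remark covers only the crossings in rows $1,\dots,t$.

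Deleting this bump run is harmless only because the one-row drop is exactly undone by the upward shift of the tiles of column $C_j-1$ lying below pipe $j$. After merging, the right edge of column $C_j-1$ in row $r\ge t+1$ is the old right edge in row $r+1$, and it now abuts the old tile $(r,C_j+1)$. This holds whether pipe $j$ crosses $(t+1,C_j-1)$ or bumps in $(t+1,C_j-1),(t+2,C_j-1)$; in the latter case the glued tile's right edge comes from the old $(t+2,C_j-1)$.

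Similarly, for adjacent columns $k,k-1<C_j$, you need the observation that pipe $j$ leaves column $k$ and enters column $k-1$ in the same row. Hence the horizontal edges on both sides of that interface are shifted identically.

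These horizontal interfaces are what the paper's local-configuration figure checks; the vertical gluing inside a column, which you do treat, is the easy part. Without them, preservation of routes, and hence of the word $v$ and of the crossings, is unsupported. Your assertion that every other pipe keeps its sequence of bumps is also false: bumps in $C_j$ disappear, and gluing creates new ones. What is preserved is the set of crossings among the remaining pipes, and that is all you need.

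Two smaller points.

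\textbf{Condition (ii).} It is not what makes the gluing at a deleted crossing work: an unmarked bump directly above a horizontal crossing of pipe $j$ glues without trouble. Together with (iii), its role is twofold. It forces the trace of pipe $j$ in each column $k<C_j$ to be either a single horizontal crossing or a vertical pair of unmarked bumps, which the definition of $\Phi_j$ presupposes. It is also what later makes $\Psi_j$ invert $\Phi_j$.

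\textbf{Markings.} The fact that tiles move up by at most one does not by itself keep the crossing strictly above the marked tile. A tile in row $r$ above pipe $j$ and a tile in row $r+1$ below pipe $j$ in another column land in the same row. The clean argument runs as follows.
\begin{enumerate}
\item Rows weakly increase along a pipe.
\item The upper-left strand of a bump enters from the row above.
\item Hence two pipes bumping in row $i$ have crossed in a row $<i$ if and only if their crossing precedes the bump along the pipes.
\item Neither of these two tiles is deleted, and this condition depends only on routes, so it survives once route preservation is established.
\end{enumerate}
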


\begin{proof}
Write  $P'=\Phi_j(P)$.  By the construction of $\Phi_j$, $P'$ has rank one less than that of $P$. 
We shall  show that the route of any pipe $j'$ ($j'\neq j$) is not changed after applying  $\Phi_j$. Precisely, we justify   that if a  pipe piece in any undeleted tile (or semi-tile)  belongs to pipe $j'$ in $P$, then it still belongs to pipe $j'$ in $P'$. 

We still use $C_j$ to   denote the column of $P$ where pipe $j$ enters. 
Clearly, in each   column to the left of $C_j$, the merging operation does not change the connection of undeleted  pipes. 
It remains to  consider the effect of  the deletion/merging operation on  the tiles in   $C_j$. 
We need to analyze the local configuration of tiles in column
$C_j$, as well as the two columns immediately to the left and right of $C_j$.  All possible situations are listed in Figure \ref{vbisv-09}, where the top (resp., bottom) three rows indicate the configuration before (resp., after) the  deletion/merging operation. 
\begin{figure}[h t]
    \centering
    
               \[\BPD{\BoldBDown\BolddelX\BoldBUP} \quad \BPD{\BoldBDown\BolddelX\BoldX}\quad\BPD{\BoldBDown\BolddelX\BoldMBup} \quad \BPD{\BoldX\BolddelX\BoldBUP} \quad \BPD{\BoldX\BolddelX\BoldX}\quad \BPD{\BoldX\BolddelX\BoldMBup}\quad \BPD{\BoldMBdown\BolddelX\BoldBUP} \quad \BPD{\BoldMBdown\BolddelX\BoldX}\quad \BPD{\BoldMBdown\BolddelX\BoldMBup}\]\[\BPD{\O\BolddowndelB\BoldBUP\\\BoldMBdown\BoldupdelB} \quad \BPD{\O\BolddowndelB\BoldX\\\BoldMBdown\BoldupdelB}\quad \BPD{\O\BolddowndelB\BoldMBup\\\BoldMBdown\BoldupdelB}\quad\BPD{\O\BolddowndelB\BoldBUP\\\BoldBDown\BoldupdelB} \quad \BPD{\O\BolddowndelB\BoldX\\\BoldBDown\BoldupdelB}\quad \BPD{\O\BolddowndelB\BoldMBup\\\BoldBDown\BoldupdelB}\quad\BPD{\O\BolddowndelB\BoldBUP\\\BoldX\BoldupdelB} \quad \BPD{\O\BolddowndelB\BoldX\\\BoldX\BoldupdelB}\quad \BPD{\O\BolddowndelB\BoldMBup\\\BoldX\BoldupdelB}\]             \vspace{2pt}  
               \[\BPD{\delBDown\BolddowndelB\BoldBUP\\\BolddelBDown\BoldupdelB} \quad \BPD{\delBDown\BolddowndelB\BoldX\\\BolddelBDown\BoldupdelB}\quad\BPD{\delBDown\BolddowndelB\BoldMBup\\\BolddelBDown\BoldupdelB} \]
 
$\BPD{\O}$ can be $\BPD{\delX},\BPD{\delBUP},\BPD{\B},\BPD{\X},\BPD{\MB}$ 

               \vspace{5pt}
        before the deletion/merging algorithm

        \[\BPD{\BoldBDown \BoldBUP} \quad \BPD{\BoldBDown \BoldX}\quad\BPD{\BoldBDown \BoldMBup} \quad \BPD{\BoldX \BoldBUP} \quad \BPD{\BoldX \BoldX}\quad \BPD{\BoldX \BoldMBup}\quad \BPD{\BoldMBdown\BoldBUP} \quad \BPD{\BoldMBdown\BoldX}\quad \BPD{\BoldMBdown\BoldMBup}\]
        \[\BPD{\BoldMBdown \BoldBUP} \quad \BPD{\BoldMBdown \BoldX}\quad\BPD{\BoldMBdown \BoldMBup} 
        \quad \BPD{\BoldBDown \BoldBUP} \quad \BPD{\BoldBDown \BoldX}\quad\BPD{\BoldBDown \BoldMBup} \quad \BPD{\BoldX \BoldBUP} \quad \BPD{\BoldX \BoldX}\quad \BPD{\BoldX \BoldMBup}\]
        \[\BPD{\BoldBDown \BoldBUP} \quad \BPD{\BoldBDown \BoldX}\quad \BPD{\BoldBDown \BoldMBup}\]    
        
               \vspace{5pt}
        after the deletion/merging algorithm
        \caption{Local configuration of tiles before and after the deletion/merging algorithm.}\label{vbisv-09}
 \end{figure}

In Figure \ref{vbisv-09}, we observe that $\Phi_j$ does not change the route of the pipe (which is drawn in boldface) that we   consider. So the proof is complete.
\end{proof}

\begin{remark}
Let $v$ be a subword of $w$. Clearly, we can directly generalize the notion of removable pipes to any marked  reduced  pipe dream $P$ of  $v$. Accordingly, we can define the   map  $\Phi_j$ for any removable pipe $j$ in $P$.    
\end{remark}

\begin{proposition}\label{p3.3}
Let $P\in \mathrm{MRPD}(w)$, and pipe  $j$ and pipe $j'$ ($j\neq j'$) be two removable pipes in $P$. Then pipe $j'$ is also removable in $\Phi_j(P)$. Moreover, $\Phi_j$ and $\Phi_{j'}$ commute, that is,
\[
\Phi_{j'}(\Phi_j(P))=\Phi_{j}(\Phi_{j'}(P)).
\]
\end{proposition}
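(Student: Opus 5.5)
Assume without loss of generality that $j<j'$, so the entry column $C_j$ of pipe $j$ lies strictly to the left of the entry column $C_{j'}$ of pipe $j'$. The plan rests on one observation, which is the content of the proof of Proposition \ref{lem:well_defined}: $\Phi_j$ deletes exactly the tiles and semi-tiles carried by pipe $j$ and then rigidly translates everything else (left by one unit, or up by one unit), without changing the route of any other pipe.

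The argument then proceeds in three steps.

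\textbf{Step 1: a dictionary between $P$ and $\Phi_j(P)$.} Set $P'=\Phi_j(P)$. Every surviving tile of $P$ has a well-defined image in $P'$, and I will record how local configurations along pipe $j'$ transform. Tiles of $C_{j'}$, and tiles to its right, are only shifted left by one unit, and only when $C_{j'}$ lies right of $C_j$. A tile of $P$ traversed by pipe $j'$ in a column weakly left of $C_j$ is at most shifted up by one unit together with its neighbours below pipe $j$. Likewise, the tile directly above it, relative to pipe $j'$, is either unchanged or is replaced by what sat directly above once pipe $j$'s tiles are excised.

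\textbf{Step 2: removability of $j'$ in $P'$.} I will check conditions (i)--(iii) for $j'$ in $P'$.

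Condition (iii) is immediate, since marks are preserved tile by tile and pipe $j'$ keeps its route. Here one should also check that marks remain legitimate, i.e.\ the two pipes still crossed above; this holds because routes are preserved.

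For condition (i), column $C_{j'}$ is simply shifted left. Its crossing tiles lose only possible crossings with pipe $j$, and there are none in $C_{j'}$ since pipe $j$ never reaches columns right of $C_j$. So (i) persists.

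Condition (ii) is the delicate step. For a crossing tile $T$ on pipe $j'$, the tile above $T$ in $P'$ may be a new tile, obtained when the deletion of pipe $j$'s tile above $T$ lets the merging bring down another tile. I will split into cases according to whether the tile directly above $T$ in $P$ belonged to pipe $j$.
\begin{itemize}
\item If it was a crossing tile of pipe $j$, which is deleted, then the tile now above $T$ is the tile above that crossing in $P$. By condition (ii) for pipe $j$, this tile is not an unmarked bump.
\item If it was the lower semi-tile of a bump pair of pipe $j$, then the merged configuration, read off from the local pictures in Figure \ref{vbisv-09}, is a bump whose lower half comes from below. This needs care. Here I expect to use condition (ii) for $j$ together with the fact that the crossing $T$ lies on $j'$ and on some pipe crossing $j$, to show that the new tile above $T$ is a crossing or a marked bump.
\end{itemize}

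\textbf{Step 3: commutation.} Both $\Phi_{j'}\circ\Phi_j$ and $\Phi_j\circ\Phi_{j'}$ delete exactly the union of the tiles and semi-tiles of pipes $j$ and $j'$. They then shift each surviving piece left by the number of deleted entry columns to its left, and up by the number of pipes among $\{j,j'\}$ lying above it in its column. Since the total displacement is the same for both orders, and since by Step 1 pipe routes and marks are preserved, the two results are identical marked diagrams.

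The main obstacle is condition (ii) in Step 2, especially where pipes $j$ and $j'$ cross each other. At such a crossing tile, which is deleted as part of pipe $j$'s horizontal traversal, pipe $j'$ runs vertically, so after deletion pipe $j'$ becomes two bump semi-tiles. I would verify directly, by the casework of Figure \ref{vbisv-09}, that this remains consistent with (ii) and (iii) for $j'$ in $P'$.
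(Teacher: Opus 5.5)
Your overall strategy matches the paper's: the routes of the other pipes are preserved, you check (i)--(iii) for $j'$, and the two composites agree because each deletes the union of the pieces of pipes $j$ and $j'$ and then merges. However, the step you flag as delicate is left open, and the plan you sketch for it cannot succeed.

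\textbf{The open case in (ii).} In your second bullet for (ii), suppose the tile directly above the crossing $T$ of pipe $j'$ were the lower tile of a bump pair of pipe $j$. Both tiles of that pair are unmarked by condition (iii) for $j$. After merging, the tile above $T$ in $\Phi_j(P)$ is therefore an unmarked bump assembled from two unmarked semi-tiles, which is exactly a violation of (ii). So no use of (ii) for $j$ can show that the new tile is a crossing or a marked bump; you must show this configuration never occurs. The missing ingredient is condition (ii) for $j'$ \emph{in $P$}, which your case split never invokes. It says the tile $B$ directly above $T$ in $P$ is a crossing or a marked bump. Since pipe $j$ traverses no marked bump, if $B$ lies on pipe $j$ it must be a crossing; that is your first bullet, settled by (ii) for $j$. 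If $B$ is not on pipe $j$, it is not deleted and stays directly above $T$. This dichotomy is the paper's entire proof of (ii).

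\textbf{The reduction to $j<j'$.} This is not legitimate. The claim that $j'$ is removable in $\Phi_j(P)$ is not symmetric in $j$ and $j'$, and the commutation needs both $\Phi_{j'}\circ\Phi_j$ and $\Phi_j\circ\Phi_{j'}$ to be defined. Your check of (i) uses that $C_{j'}$ lies right of $C_j$. In the other case pipe $j$ passes through $C_{j'}$, and you must check the result of deleting $j$ there. Either its crossing (traversed vertically by $j'$) is removed, or its bump pair is merged; in both cases $C_{j'}$ should remain crossings on $j'$ followed by unmarked bumps. This is easy, but it has to be said.

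\textbf{The ``main obstacle''.} Your description of it is inaccurate. A removable pipe runs vertically only in its own entry column. So for $j<j'$ the two pipes can cross only in $C_j$, with $j$ vertical and $j'$ horizontal. That tile disappears with $C_j$, and the two horizontal arms of $j'$ are rejoined by the left shift (first row of Figure~\ref{vbisv-09}). When instead the larger pipe is the one deleted, the shared crossing is a horizontal crossing of the deleted pipe inside the smaller pipe's entry column. Removing it merely shortens the surviving pipe's vertical run. In neither case does a pipe acquire bump semi-tiles, so there is nothing to verify there.
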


\begin{proof}
We first explain  that pipe $j'$ is also  removable  in $\Phi_j(P)$. 
According to  the proof of Proposition \ref{lem:well_defined}, 
the route of pipe $j'$ remains  unchanged after the deletion/merging algorithm. 
Clearly, for pipe $j'$ in $\Phi_j(P)$, the conditions  (i) and (iii) in the definition of a removable pipe are satisfied. It remains to check the condition  in (ii). Suppose that $(x,y)$ is a crossing tile in $\Phi_j(P)$ traversed by pipe $j'$.
We need to check that the tile $(x-1,y)$ (if any) in $\Phi_j(P)$ is not an unmarked  bumping tile  $\BPD{\B} $. 
Let $B$ denote the tile (if any)  immediately above $(x,y)$ in $P$. Since pipe $j'$ is removable in $P$, it follows  that $B$ is a crossing tile or a marked bumping tile. If $B$ is not traversed by pipe $j$ in $P$, then $B$ will not be removed  after the deletion procedure, and thus  $(x-1,y)=B$ is not $\BPD{\B} $. 
Now consider the case when $B$ is traversed by pipe $j$. In this case, $B$ must be a crossing. Since pipe $j$ is removable in $P$, the tile in $P$ immediately above $B$ (if any), denoted $B'$, cannot be $\BPD{\B} $. This implies that in  $\Phi_j(P)$, we have $(x-1,y)=B'$ which is not $\BPD{\B} $.  So condition (ii) is satisfied for  pipe $j'$ in 
$\Phi_j(P)$, concluding that pipe $j'$ is   removable  in $\Phi_j(P)$.

Now we see that $\Phi_{j'}(\Phi_j(P))$ is obtained from $P$   by deleting all tiles or semi-tiles traversed by  pipes $j$ or $j'$, and then performing the merging operation, independent of the order of $j$ and $j'$. So we have  $\Phi_{j'}(\Phi_j(P))=\Phi_{j}(\Phi_{j'}(P))$.
\end{proof}

\begin{remark}\label{aghh-678}
Let $P\in \mathrm{MRPD}(w)$, and pipe  $j$ be a removable pipe of $P$. Using the similar analysis to that in the proof of Proposition \ref{p3.3}, we can show that if pipe $j'$ in $P$ is not removable, then it is also not removable in  $\Phi_j(P)$.
    
\end{remark}

By Proposition \ref{p3.3} and Remark  \ref{aghh-678}, we are able to repeatedly apply the map $\Phi_j$ to erase    all the removable pipes of a marked reduced pipe dream $P\in \mathrm{MRPD}(w)$, regardless of the order in which the pipes are deleted. As a result, we obtain a core marked  reduced  pipe dream corresponding to a subword of $w$.  More concretely, assume that $P$ has $k$ removable pipes labeled $j_1,\ldots, j_k$, and $v$ is the  subword   by deleting the entries $j_1,\ldots, j_k$ of $w$. Then
\[
\Phi(P)=\Phi_{j_k}(\cdots \Phi_{j_2}(\Phi_{j_1}(P))\cdots)
\]
is a core marked reduced pipe dream of $v$. 
We shall call $\Phi$ the reduction map.
Figure \ref{fig:reduction} gives an illustration of the  construction of $\Phi$. 

\begin{figure}[h]
    \centering
    \[
\BPD[1.25pc]{
\M{}\M{1}\M{\fcolorbox{red}{yellow}{2}}\M{3}\M{\fcolorbox{red}{yellow}{4}}\M{5}\M{\fcolorbox{red}{yellow}{6}}\\   
\M{1}\delBDown\delB\X\B\B\B \\
\M{2}\delBUP\delB\X\B\B \\
\M{4}\B\delB\B\B \\
\M{5}\MB\delB\B \\
\M{3}\B\delB \\
\M{6}\B \\
}\longrightarrow
\BPD[1.25pc]{
\M{}\M{1}\M{3}\M{\fcolorbox{red}{yellow}{4}}\M{5}\M{\fcolorbox{red}{yellow}{6}}\\   
\M{1}\delBDown\delX\delB\B\B \\
\M{4}\delBUP\X\delB\B \\
\M{5}\MB\B\delB \\
\M{3}\B\B \\
\M{6}\B \\ 
}\longrightarrow
\BPD[1.25pc]{
\M{}\M{1}\M{3}\M{5}\M{\fcolorbox{red}{yellow}{6}}\\   
\M{1}\B\X\delBDown\delB \\
\M{5}\MB\delBDown\delBUP \\
\M{3}\delBDown\delBUP \\
\M{6}\delBUP\\
}\longrightarrow
\BPD[1.25pc]{
\M{}\M{1}\M{3}\M{5}\\   
\M{1}\B\X\B \\
\M{5}\MB\B \\
\M{3}\B \\
\\
}
\]
    \caption{Illustration of the reduction map $\Phi$.}
    \label{fig:reduction}
\end{figure}
Just as for a permutation, write $\mathrm{CMRPD}(v)$ for the set of core marked reduced pipe dreams of a subword $v$. Now we have a map
\[
\Phi\colon \mathrm{MRPD}(w)\rightarrow \bigsqcup_{v\leq w} \mathrm{CMRPD}(v),
\]
where $v$ runs over all subwords of $w$. A crucial fact is that the map $\Phi$ is an injection, which  will be shown in the next subsection.

\subsection{The augmenting algorithm}

Let $u<v\leq w$ be two subwords of $w$ such that $u$ is obtained from $v$ by deleting  an entry $j$. 
For $P\in \mathrm{MRPD}(v)$, let $P'=\Phi_j(P)$ which is a marked reduced pipe dream of $u$. We shall explain    how to  uniquely recover  $P$    from $P'$.

Write $v=v_1v_2\cdots v_m$ and $u=v_1\cdots \hat{v}_i\cdots v_m$, where $\hat{v}_i=j$ means deletion. Moreover, assume that there are $\ell$ entries in $v$ that are less than $j$. 

\begin{itemize}
    \item[(1)] First, for  $1\leq k \leq  \ell$, we insert a crossing tile $\BPD{\X}$ or two consecutive  semi-tiles \(\BPD{\delBDown\\ \delBUP\\}\) (lying between the red lines) in the $k$-th column of $P'$ as follows.   Start with $k=1$. 
Look at the tile $(i-1,k)$ in row $i-1$ and   column $k$. 
\begin{itemize}
    \item[$\bullet$] If $(i-1,k)$ is not an unmarked bumping tile \(\BPD{\B}\), then shift all the tiles in column $k$ and below row $i-1$ downward by one unit, and insert a crossing tile in the blank  position $(i,k)$.  
   Otherwise (that is, $(i-1,k)$ is  \(\BPD{\B}\)), then shift the  lower-right  simi-tile of $(i-1,k)$, along with all the tiles in column $k$ below it,  downward by one unit, and then insert two consecutive semi-tiles into the resulting  blank space.  In the latter case, we need to reset $i:=i-1$.  Do the same for $i$ and $k:=k+1$, and terminate  until $k=\ell+1$.     
\end{itemize}

\item[(2)] It remains to recover   the $(\ell+1)$-th  column  that has been deleted from $P$ (which is the column where pipe $j$ enters). Suppose that after the above operation, the pointer variable $i$ is equal to $i_0$. We shift all the tiles, which lie in or strictly to the right of column $\ell+1$ in $P'$, to the right by one unit, and then insert $i_0-1$ crossing tiles followed by $m-\ell$ bumping tiles in column $\ell+1$.  

The above procedure  is best illustrated  by reading  Figure \ref{fig:the_operation_of_Phi_{w,i,u}} in reverse. 

\end{itemize}
We denote the above operation by $\Psi_j$. That is, if $P'=\Phi_j(P)$, then 
\[
\Psi_j(P')=P. 
\]

Implementing  the whole reduction map gives 
\[
Q=\Phi(P)=\Phi_{j_k}(\cdots \Phi_{j_2}(\Phi_{j_1}(P))\cdots),
\]
where $Q$ is a core pipe dream of the  subword obtained from  $w$ by deleting $j_1,\ldots, j_k$. 
Then we have 
\[
P=\Psi_{j_1}(\cdots \Psi_{j_{k-1}}(\Psi_{j_k}(Q)\cdots). 
\]
This procedure, denoted $\Psi$, is called  the augmenting map.

Collecting the above, 
we conclude that 
\begin{proposition}\label{inj-12}
 The reduction  map 
 \[
\Phi\colon \mathrm{MRPD}(w)\rightarrow \bigsqcup_{v\leq w} \mathrm{CMRPD}(v),
\]
is an injection. Moreover, for any $P\in \mathrm{MRPD}(w)$, we have $P=\Psi(\Phi(P))$. 
\end{proposition}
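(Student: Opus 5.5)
The plan is to show that $\Psi_j$ is a left inverse of $\Phi_j$ for a single removable pipe, and then to iterate this over all removable pipes. Injectivity of $\Phi$ will then follow once we know the data needed to run $\Psi$ can be read off from $\Phi(P)$ itself.

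\textbf{Step 1: one pipe.} Fix $P\in\mathrm{MRPD}(v)$ with removable pipe $j = v_i$, and let $\ell$ be the number of entries of $v$ smaller than $j$. Pipe $j$ enters at column $\ell+1$; this is $C_j$, whose columns to the left are exactly the ones it traverses. It exits at row $i$.

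Trace pipe $j$ column by column, from column $\ell$ down to column $1$ (the order in which $\Psi_j$ will rebuild them). In each such column, pipe $j$ occupies either a single crossing tile $\BPD{\X}$ that it crosses horizontally, or a pair of semi-tiles inside two vertically adjacent bumping tiles. I will argue that these two cases are distinguished by exactly the rule $\Psi_j$ uses. Pipe $j$ moves from the column to its right into this column along a fixed row $r$, which is the current pointer value.
\begin{itemize}
\item[$\bullet$] Suppose pipe $j$ turns up in this column. Then it enters tile $(r,k)$ from the right, so that tile is an unmarked bump. (Crossing tiles force straight travel, and by (iii) the bump is unmarked.) After the bump, pipe $j$ exits the top of $(r,k)$ into $(r-1,k)$ from below, and then leaves it to the left. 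So the pipe's route uses the semi-tiles of $(r,k)$ and $(r-1,k)$, and the pointer drops to $r-1$.
\end{itemize}
After deletion and merging, the tile that $\Psi_j$ inspects at the corresponding position is precisely the tile that was above the deleted piece. So the test ``is it $\BPD{\B}$?'' in $\Psi_j$ reproduces the case distinction. Condition (ii) is exactly what guarantees that a crossing-tile deletion is never confused with the bump case: the tile above a deleted crossing is never an unmarked $\BPD{\B}$.

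Column $C_j$ itself is recovered from (i). It consists of $i_0-1$ crossings followed by bumps, where $i_0$ is the row at which pipe $j$ turns left out of $C_j$, i.e.\ the final pointer. Since the merging step only shifts tiles, $\Psi_j(\Phi_j(P))=P$. This local bookkeeping is the main obstacle. The delicate part is matching the row index after the upward shift of the tiles below pipe $j$, together with the reset $i:=i-1$. I would organize it as an induction on $k$ with the invariant ``the pointer equals the row in $P'$ just below where pipe $j$ passed in column $k$''. Figure \ref{vbisv-09} handles the interface with $C_j$.

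\textbf{Step 2: iterate.} For $P\in\mathrm{MRPD}(w)$, Proposition \ref{p3.3} and Remark \ref{aghh-678} show that the set $\{j_1,\dots,j_k\}$ of removable pipes stays removable and is unchanged in meaning along the chain $\Phi_{j_1},\Phi_{j_2},\dots$. Each intermediate diagram lies in $\mathrm{MRPD}$ of a subword by Proposition \ref{lem:well_defined}, with $j_t$ removable there. Applying Step 1 at each stage gives $P=\Psi_{j_1}\cdots\Psi_{j_k}(\Phi(P))$.

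\textbf{Step 3: injectivity.} Suppose $\Phi(P)=\Phi(\tilde P)=Q\in\mathrm{CMRPD}(u)$. Then the deleted labels are the same for $P$ and $\tilde P$, namely the entries of $w$ not in $u$. The positions $i$ and the counts $\ell$ used by each $\Psi_{j_t}$ depend only on the intermediate subwords, hence only on $w$ and $u$. By commutativity (Proposition \ref{p3.3}) the order of deletion is irrelevant, so the same ordering can be used for both. Therefore $P=\Psi(Q)=\tilde P$.
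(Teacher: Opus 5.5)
Your strategy is the paper's: construct $\Psi_j$ with $\Psi_j(\Phi_j(P))=P$, iterate over the removable pipes, and read off injectivity. Steps 2 and 3 are fine. Step 3, which fixes one deletion order via Proposition \ref{p3.3} so that $\Psi$ is an honest function of the tagged target, spells out something the paper leaves implicit.

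The problem is Step 1, which carries all the content: it is set up backwards and does not work as written. In this paper a bumping tile joins its right edge to its bottom edge and its top edge to its left edge. So a pipe entering a bump from the right leaves downward. In the bump case pipe $j$ uses the lower-right half of $(r,k)$ and the upper-left half of $(r+1,k)$, and leaves column $k$ at row $r+1$; it never ``turns up''. Also, $\Psi_j$ processes columns $k=1,\dots,\ell$, not $\ell,\dots,1$, and it must. The only position of pipe $j$ that survives in $P'$ is its exit row $i$. The starting pointer for a right-to-left pass would be the row where pipe $j$ leaves $C_j$, i.e. one plus the number of crossings in $C_j$, and that information is deleted along with $C_j$. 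Your own use of $i_0$ as ``the final pointer'' from which $C_j$ is rebuilt is only consistent with the left-to-right order.

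To repair it, trace pipe $j$ backwards from the left boundary. Use the invariant that when $\Psi_j$ reaches column $k$, the pointer $i$ is the row at which pipe $j$ leaves column $k$ to the left in $P$; so $i$ starts at the position of $j$ in $v$.
\begin{itemize}
\item[$\bullet$] If pipe $j$ passes column $k$ through a crossing at $(i,k)$, the tile $(i-1,k)$ of $P'$ is the untouched tile directly above it. By (ii) it is not an unmarked bump. So $\Psi_j$ reinserts the crossing and keeps $i$, which is the row where pipe $j$ leaves column $k+1$.
\item[$\bullet$] If instead pipe $j$ uses the bumps $(i-1,k)$ and $(i,k)$, then after merging the tile $(i-1,k)$ of $P'$ is \emph{not} the tile above the deleted piece. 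It is the tile glued from the upper-left half of the old $(i-1,k)$ and the lower-right half of the old $(i,k)$. It is an unmarked bump because both original bumps are unmarked by (iii). So $\Psi_j$ splits it and resets $i:=i-1$, which is again the row where pipe $j$ leaves column $k+1$.
\end{itemize}
After column $\ell$ the pointer equals one plus the number of crossings in $C_j$, and by (i) this determines $C_j$. With Step 1 done this way, the rest of your argument goes through.
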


For a subword $v\leq w$ of $w$, denote 
 \[
d_v{(\beta)}= \sum_{P\in \operatorname{CMRPD}(v)} \beta^{   \operatorname{mbt}(P) }.
  \]
Since $\Phi$ preserves   marked bumping tiles, as an immediate  application of Proposition  \ref{inj-12}, we obtain the following upper bound for $\Upsilon_w{(\beta)}$. 

\begin{corollary}
For any permutation $w$, we have the following coefficient-wise inequality 
     \[
\Upsilon_w{(\beta)}\leq_{\mathrm{coeff}} \sum_{v\leq w}\,    d_v{(\beta)}.
  \]
\end{corollary}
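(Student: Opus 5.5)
This follows directly from Proposition \ref{inj-12}, once we check that the reduction map preserves the statistic $\mathrm{mbt}$. By Corollary~2.3 (the specialization of Theorem \ref{thm:beta_Gro} at $x_i=1$),
\[
\Upsilon_w(\beta)=\sum_{P\in \mathrm{MRPD}(w)} \beta^{\mathrm{mbt}(P)}.
\]
Proposition \ref{inj-12} gives an injection $\Phi\colon \mathrm{MRPD}(w)\to \bigsqcup_{v\le w}\mathrm{CMRPD}(v)$. Hence it is enough to prove that $\mathrm{mbt}(\Phi(P))=\mathrm{mbt}(P)$ for every $P$. The sum above then becomes a sum over the image of $\Phi$, which is a subset of the disjoint union. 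Every term $\beta^{\mathrm{mbt}}$ has nonnegative coefficients, so extending the sum to the whole disjoint union can only increase each coefficient. The whole union contributes exactly $\sum_{v\le w} d_v(\beta)$.

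The main step is the invariance of $\mathrm{mbt}$ under each elementary map $\Phi_j$; the statement for $\Phi$ then follows by composition. Let $j$ be a removable pipe and let $C_j$ be the column where it enters.

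\textbf{Tiles deleted by $\Phi_j$.}
\begin{itemize}
\item By condition (i), column $C_j$ contains no marked bumping tiles, so deleting it removes none.
\item To the left of $C_j$, the deletion removes either a crossing tile traversed by pipe $j$, or the two semi-tiles of bumping tiles that carry the segment of pipe $j$. By condition (iii), no marked bumping tile is traversed by pipe $j$, so every bumping tile affected here is unmarked.
\end{itemize}

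\textbf{Merged bumping tiles.} When two semi-tiles are merged into one bumping tile, both halves come from unmarked tiles, and the merged tile is again unmarked. The local pictures in Figure \ref{vbisv-09} confirm that the routes of the surviving pipes are unchanged.

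\textbf{Untouched tiles.} Any other tile, in particular any marked bumping tile, is only translated. It stays a marked bumping tile traversed by the same two pipes, and by the proof of Proposition \ref{lem:well_defined} their earlier crossing survives, so the marking remains legal.

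It follows that $\mathrm{mbt}(\Phi_j(P))=\mathrm{mbt}(P)$.

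The step that needs the most care is the merged semi-tile case. Before merging, the two half-tiles belong to two different unmarked bumping tiles, and we must make sure the resulting single tile is counted as unmarked and that no marking is created or lost. This is exactly what conditions (ii) and (iii) in the definition of a removable pipe guarantee.
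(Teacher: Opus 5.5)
Your argument is correct and is the same as the paper's: the bound comes from the injectivity of $\Phi$ in Proposition \ref{inj-12} together with the fact that $\Phi$ preserves marked bumping tiles, after which coefficient nonnegativity lets you enlarge the sum to all of $\bigsqcup_{v\le w}\mathrm{CMRPD}(v)$. The paper asserts the $\mathrm{mbt}$-invariance in one line, and your check via conditions (i) and (iii) faithfully expands that assertion: no marked tile is deleted, merged tiles are assembled from unmarked halves, and every other tile is only translated.
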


Let us take an example to illustrate the injection $\Phi$. 
 In Figure   \ref{fig:Red_inj}, we apply the map $\Phi$ to all the marked reduced pipe dreams of  $w=1423$. In this example, the map $\Phi$ is not surjective. Actually, as shown in  Figure   \ref{fig:Red_inj-2},  there are two additional  core marked reduced pipe dreams of the subword $143$  which do not appear as images  in Figure   \ref{fig:Red_inj}. 
\begin{figure}[h t]
    \centering
    
    \[
\overset{\BPD[1.5pc]{
\M{}\M{1}\M{2}\M{3}\M{4}\\   
\M{1}\B\B\B\B \\
\M{4}\X\X\B \\
\M{2}\B\B \\
\M{3}\B \\
}}{\underset{\BPD[1.5pc]{
\M{}\M{1}\M{2}\M{3}\M{4}\\   
\M{1}\B\B\B\B \\
\M{4}\X\X\B \\
\M{2}\B\B \\
\M{3}\B \\
}}{\Big\downarrow}}
  \overset{\BPD[1.5pc]{
\M{}\M{1}\M{2}\M{\fcolorbox{red}{yellow}{3}}\M{4}\\  
\M{1}\B\B\X\B \\
\M{4}\X\B\B \\
\M{2}\B\B \\
\M{3}\B \\
}}{\underset{\BPD[1.5pc]{
\M{}\M{1}\M{2}\M{4}\\   
\M{1}\B\B\B \\
\M{4}\X\B \\
\M{2}\B \\
}}{\Big\downarrow}}
  \overset{ \BPD[1.5pc]{
\M{}\M{\fcolorbox{red}{yellow}{1}}\M{\fcolorbox{red}{yellow}{2}}\M{\fcolorbox{red}{yellow}{3}}\M{\fcolorbox{red}{yellow}{4}}\\   
\M{1}\B\X\X\B \\
\M{2}\B\B\B \\
\M{3}\B\B \\
\M{4}\B \\
}}{\underset{\raisebox{-40pt}{$\emptyset$}}{\Big\downarrow}}
\overset{\BPD[1.5pc]{
\M{}\M{1}\M{2}\M{\fcolorbox{red}{yellow}{3}}\M{4}\\  
\M{1}\B\X\X\B \\
\M{4}\MB\B\B \\
\M{2}\B\B \\
\M{3}\B \\
}}{\underset{\BPD[1.5pc]{
\M{}\M{1}\M{2}\M{4}\\   
\M{1}\B\X\B \\
\M{4}\MB\B \\
\M{2}\B \\
}}{\Big\downarrow}} 
\overset{\BPD[1.5pc]{
\M{}\M{1}\M{2}\M{3}\M{4}\\   
\M{1}\B\B\X\B \\
\M{4}\X\MB\B \\
\M{2}\B\B \\
\M{3}\B \\
}}{\underset{\BPD[1.5pc]{
\M{}\M{1}\M{2}\M{3}\M{4}\\   
\M{1}\B\B\X\B \\
\M{4}\X\MB\B \\
\M{2}\B\B \\
\M{3}\B \\
}}{\Big\downarrow}}   
    \]
    \caption{The reduction map applied to marked reduced pipe dreams of $1423$.}

    \label{fig:Red_inj}
\end{figure}

\begin{figure}[h t]
    \centering

\[\BPD[1.5pc]{
\M{}\M{1}\M{3}\M{4}\\   
\M{1}\B\B\B \\
\M{4}\X\B \\
\M{3}\B \\
}\ \ \ \ \ 
\BPD[1.5pc]{
\M{}\M{1}\M{3}\M{4}\\   
\M{1}\B\X\B \\
\M{4}\MB\B \\
\M{3}\B \\
}\]

    \caption{Core marked reduced pipe dreams that are  not contained in $\Phi(\mathrm{MRPD}(1423))$.}

    \label{fig:Red_inj-2}
\end{figure}

\begin{remark}\label{ji-087}
Of course, we can perform $\Psi_j$ on any marked reduced pipe dream $P$ of a subword in a permutation $w$. However, in   general, the resulting (marked) line diagram $\Psi_j(P)$ is not necessarily   a  legal marked reduced pipe dream. The possible flaw is that there may exist two pipes in $\Psi_j(P)$ crossing  more than once. It is not hard  to check that  if this happens, one of the two pipes must be the newly  added  pipe $j$ since the intersection state of the pipes in $P$ will not be changed after the action of $\Psi_j$.

For example, in Figure    \ref{fig:placeholder}, we illustrate the resulting line diagrams by applying $\Psi_2$ to the marked reduced pipe  dreams of the subword $143$ given in Figure \ref{fig:Red_inj-2}. As can be seen, the newly added pipe $2$  intersects pipe  $3$  twice  in each of the images. 
\end{remark}

\begin{figure}[h]
    \centering
    \[\BPD[1.5pc]{
    \M{}\M{1}\M{3}\M{4}\\   
    \M{1}\B\B\B \\
    \M{4}\X\B \\
    \M{3}\B \\
    }\longrightarrow
    \BPD[1.5pc]{
    \M{}\M{1}\M{2}\M{3}\M{4}\\   
    \M{1}\B\X\B\B \\
    \M{4}\X\X\B \\
    \M{2}\X\B\\
    \M{3}\B \\
    }\quad\BPD[1.5pc]{
    \M{}\M{1}\M{3}\M{4}\\   
    \M{1}\B\X\B \\
    \M{4}\MB\B \\
    \M{3}\B \\
    }\longrightarrow
    \BPD[1.5pc]{
    \M{}\M{1}\M{2}\M{3}\M{4}\\   
    \M{1}\B\X\X\B \\
    \M{4}\MB\X\B \\
    \M{2}\X\B\\
    \M{3}\B \\
    }\]
    \caption{Applying   $\Psi_2$ to the marked reduced pipe dreams in Figure \ref{fig:Red_inj-2}. }
    \label{fig:placeholder}
\end{figure}

\subsection{Proofs of   Theorems \ref{thm:Grothendieck_main} and  \ref{main-1}}

Our goal in this subsection is to prove  that the  reduction map $\Phi$ is a bijection when we restrict $w$ to a permutation  avoiding  the $1423$ pattern. 
This allows us to finish the proofs of   Theorems \ref{thm:Grothendieck_main} and  \ref{main-1}.

\begin{theorem}\label{bij-al}
Let $w\in S_n$ be a permutation avoiding the  $1423$ pattern. Then the map
     \[
\Phi\colon \mathrm{MRPD}(w)\rightarrow \bigsqcup_{v\leq w} \mathrm{CMRPD}(v)
\]
is a bijection. 
\end{theorem}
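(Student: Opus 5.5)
Injectivity of $\Phi$ is already given by Proposition \ref{inj-12}, so it remains to prove surjectivity. Fix a subword $v\leq w$ and a core pipe dream $Q\in\mathrm{CMRPD}(v)$. Write $w$ as $v$ with the entries $j_1,\ldots,j_k$ inserted. We will show that applying the augmenting operations $\Psi_{j}$ one at a time produces a legal marked reduced pipe dream $P$ of $w$. In $P$ the inserted pipes $j_1,\ldots,j_k$ should be exactly the removable pipes, so that $\Phi(P)=Q$.

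\textbf{Order of insertion.} We insert the missing entries in decreasing order of their values, so each new pipe enters to the left of every pipe added after it. This reduces everything to a single step. Let $u<u'\leq w$, where $u'$ is $u$ with an entry $j$ inserted, and let $R\in\mathrm{MRPD}(u)$ be a pipe dream in which every pipe not belonging to the core pipes of $Q$ is removable. We must check three things.
\begin{itemize}
\item[(a)] $\Psi_j(R)$ is reduced.
\item[(b)] Pipe $j$ is removable in $\Psi_j(R)$.
\item[(c)] Removability or non-removability of every other pipe is unchanged.
\end{itemize}
Property (c) should follow from the local analysis in Figure \ref{vbisv-09}, run in reverse, together with the argument of Proposition \ref{p3.3} and Remark \ref{aghh-678}. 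Property (b) holds by construction. In column $C_j$ there are crossings followed by bumps, with no marks. Step (1) of $\Psi_j$ places semi-tiles under an unmarked $\BPD{\B}$ exactly when a crossing would violate condition (ii). Pipe $j$ passes through no marked tile. So the whole difficulty lies in (a).

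\textbf{Reducedness.} By Remark \ref{ji-087}, a double crossing in $\Psi_j(R)$ can only involve the new pipe $j$ and some pipe $a$. Pipe $j$ crosses pipe $a$ horizontally at each crossing tile it occupies in columns $1,\ldots,\ell$. In column $C_j$ it crosses pipes vertically. I would identify which pipes pipe $j$ crosses, and in what order, by tracking the pointer $i$ in step (1) of $\Psi_j$. The goal is to show that each pipe is met at most once, and that the resulting pattern of crossings is the correct one for $u'$; both follow once each crossing pair is inverted exactly once. Suppose pipe $j$ crosses some pipe $a$ twice. The examples in Figure \ref{fig:placeholder} suggest the mechanism: the extra crossing arises when pipe $j$ runs horizontally through a crossing tile whose vertical strand is a pipe $a>j$ lying to the right of $j$ in $u'$. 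Pipe $a$ must also be crossed again, because it passes beneath pipe $j$'s entry column. Such a pipe $a$ sits below pipe $j$ on the left boundary, so $j$ precedes $a$ in $u'$. I then want to produce a value $b>a$ that lies between $j$ and $a$ in $u'$, together with an entry $c<j$ that precedes $j$. Heuristically $c$ should be the pipe whose bumping or crossing configuration sends pipe $j$'s path through that tile. Then $c\,b\,j\,a$, that is, $c\,j\ldots$ with the order $c<j<a<b$ read as $c,b,j,a$, would form a $1423$ pattern in $u'\leq w$. This contradicts the hypothesis, since pattern avoidance passes to subwords.

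\textbf{Main obstacle.} I expect the hardest step to be this last one: extracting the entries $c$ and $b$ rigorously from the geometry. The route of pipe $j$ in $\Psi_j(R)$ zigzags, moving up one row at each unmarked $\BPD{\B}$, and I need a clean description of which pipes it crosses. My first attempt will be an invariant for the pointer $i$ after processing column $k$: the pipes crossed by pipe $j$ in columns $\leq k$ are exactly those pipes entering columns $\leq k$ that exit below row $i$. A second crossing with $a$ would then force pipe $a$ to turn around pipe $j$. That turning requires an unmarked bump, which supplies the pipe $b$. The requirement that bumps above crossings of pipe $j$ be non-unmarked (condition (ii)) is exactly what should supply $c$. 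If this invariant fails, the fallback is induction on $k$ together with a case analysis near the first double crossing, using the reduced-word interpretation of pipe dreams.
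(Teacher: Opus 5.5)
Reducing surjectivity to one insertion step and checking (a)--(c) is the paper's strategy too. Your items (b) and (c) are more explicit than the paper; (c) follows from Proposition \ref{p3.3} and Remark \ref{aghh-678} once (a) and (b) hold. The gap is (a), which is the whole substance of the theorem. You do not prove it, and the mechanism you sketch is not the right one.

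The argument needed runs as follows. Suppose pipe $j$ meets pipe $a$ twice in $\Psi_j(R)$. Pipe $j$ runs vertically through crossings only in its entry column $C_j$. Hence $j<a$, and there are exactly two crossings: the northeast one in $C_j$, and the southwest one at some $(x,y)$ with $j$ horizontal. By the rule defining $\Psi_j$, the tile $(x-1,y)$ is a crossing or a marked bump.

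The ``4'' of the pattern is the pipe $b$ running horizontally through $(x-1,y)$, or the upper-left pipe if that tile is a marked bump. We have $b>a$, because $b$ and $a$ cross exactly once (Remark \ref{ji-087}) and $b$ is the horizontal (resp.\ upper-left) pipe there.

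The key missing idea is an extremal choice: take $a$ to be the double-crossing partner of $j$ that exits closest to $j$ on the left boundary. If $b$ also crossed $j$ twice, $b$ would exit strictly between $j$ and $a$, contradicting that choice. So $b$ crosses $j$ once and exits \emph{above} $j$, and $b\,j\,a$ is a $312$.

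The ``1'' comes from the bump $(x',y)$ where pipe $a$ turns down into column $y$ above row $x-1$. Take the top-leftmost bump in the rectangle of rows $1,\ldots,x'$ and columns $1,\ldots,y$. Its upper-left pipe $c$ enters in a column $\le y<C_j$, so $c<j$, and it exits in a row $\le x'$, hence above $b$. Then $c\,b\,j\,a$ is a $1423$.

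Your sketch gets these ingredients wrong in ways that go beyond polishing:
\begin{itemize}
\item You ask for $b$ ``between $j$ and $a$ in $u'$''. That gives $c\,j\,b\,a$, a $1243$ pattern, and it is exactly the configuration the extremal choice must rule out.
\item You assign the roles backwards. You expect an unmarked bump to supply $b$ and condition (ii) to supply $c$. In fact the non-unmarked tile above the southwest crossing (condition (ii)) supplies $b$, and a bump supplies $c$.
\item Your pointer invariant is false as stated. In the first example of Figure \ref{fig:placeholder}, after column $1$ is processed, pipe $2$ has crossed pipe $3$ in that column, yet pipe $3$ does not enter a column $\le 1$.
\item A small slip: inserting in decreasing order of value makes each new pipe enter to the \emph{right} of the pipes added after it, not the left. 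No order is needed anyway, since the one-step claim is proved for every $u<u'\le w$.
\end{itemize}
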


\begin{proof}
By Proposition \ref{inj-12}, the map $\Phi$ is an injection. So we need to show that when $w$ does not contain the $1423$ pattern,  $\Phi$ is a surjection. 

Let $u<v\leq w$ be two subwords of $w$ such that $u$ is obtained from $v$ by deleting the entry $j$.  Since $w$ avoids the $1423$ pattern,   both $u$ and $v$ avoid the $1423$ pattern.  Assume that $P$ is a given marked reduced pipe dream of $u$. As explained in Remark \ref{ji-087}, we may perform  the augmenting algorithm $\Psi_j$ on $P$, resulting in a line   diagram, denoted    $Q=\Psi_j(P)$, corresponding to  the subword $v$.   

\begin{itemize}
    \item[] {\bf Claim.} Any two pipes in $Q$ can cross at most once. 
\end{itemize}
By Remark \ref{ji-087}, we only need to prove that pipe $j$ intersects pipe $k$ ($k\neq j$)  at most once in $Q$. We shall give a proof  by contradiction. 

Suppose that pipe $j$ and pipe  $k$ intersect more than once. 
According to the construction  of \(\Psi_{j}\), the crossing tile where pipe $j$ traverses vertically  appears only in the column where $j$ enters. That means that $j<k$, and moreover these two pipes cross exactly twice, and  the  northeast  crossing position   occurs in the column where pipe  $j$ enters. Assume that the southwest crossing position   occurs at $(x,y)$. Then, by the construction of $\Psi_j$, the tile  $(x-1,y)$ is either  a crossing tile or a marked bumping tile. We discuss these two cases separately, which can be well understood by means of  the illustration in Figure  \ref{ill-avjo}  where the tile $(x,y)$ is shaded.

\begin{itemize} 
    \item[] Case 1. The tile  $(x-1,y)$ is a crossing.  Note that pipe $j$ traverses horizontally through $(x,y)$, and pipe $k$  traverses vertically through $(x-1,y)$. Suppose the pipe traversing horizontally through $(x-1,y)$ is  labeled  $\ell$. By Remark \ref{ji-087},  pipe $k$ and pipe $\ell$   cross exactly once in $Q$, indicating that $\ell>k$. If pipe $\ell$ and pipe $j$ cross more than  once, then  pipe $\ell$ exits on the left boundary  between   pipe $j$ and  pipe $k$. If this happens, we  replace pipe  $k$ by pipe $\ell$ and repeat the above analysis, getting a new pipe  $k$ exiting from the left boundary closer to   pipe $j$. Assuming that we have chosen the closest one, it follows that  pipe $\ell$ and pipe $j$ have to  cross  only once. Therefore pipe $\ell$ exits above pipe $j$, and thus the subword $\ell j k$ forms a $312$ pattern of $v$.

Another observation is that   pipe $k$ traverses vertically through the tile $(x-1,y)$, and so it will yield a bumping tile $(x',y)$ which is  in  column $y$ and above row $x-1$. Then, among all the bumping tiles contained in the rectangle $x'\times y$,  locate the top-leftmost one. We see that the upper-left pipe in this tile, denoted   pipe $i$, enters from some column to the left of column $y$ and exits from a row above $x'$. Now we obtain that the subword  $i\ell j k$  forms a $1423$ pattern of $v$, arriving at a contradiction.

    \item[] Case 2.   The tile  $(x-1,y)$ is a marked bumping tile.  We also suppose that the upper-left pipe in  $(x-1,y)$ is pipe $\ell$. Using completely the same  analysis as in Case 1, we may assume that pipe $\ell$ exits from a row  above pipe $j$. Similarly, there is a bumping tile formed  by pipe $\ell$ which is in column $y$ and above row $x-1$. We can then find a pipe $i$ with $i<j$ that exits from the left boundary above row $x-1$.  Thus the subword  $i\ell j k$ also forms a $1423$ pattern of $v$, leading to a contradiction.  
\end{itemize}
The arguments in the above two cases allow us to conclude the  proof of the {\bf claim}. 

Now we see that $Q$ is a well-defined marked reduced pipe dream of $v$. Therefore, for any subword $v\leq w$ and any $Q\in \mathrm{CMRPD}(v)$, we may repeatedly apply the augmenting map to $v$, eventually yielding a marked reduced pipe dream  $P\in \mathrm{MRPD}(w)$ such that $\Phi(P)=Q$. This confirms that $\Phi$ is  a surjection. 
\end{proof}

\begin{figure}[h]
    \centering
    
\[\BPD[1pc]{
\M{}\M{}\M{}\M{i}\M{}\M{}\M{}\M{}\M{}\M{}\M{j}\M{}\M{}\M{}\M{k}\M{\ell}\\   
\M{}\X\X\X\O\O\O\O\O\O\X\O\O\F\J\X \\
\M{}\X\X\X\O\O\O\O\O\O\Lb{\vdots}\F\X\J\F\J\\
\M{i}\X\X\J\O\O\O\O\O\F\X\J\F\X\J\\
\M{}\O\O\O\O\O\O\F\X\J\Lb{\vdots}\F\J\\
\M{}\O\O\O\O\O\F\J\O\F\X\J\\
\M{}\O\O\O\O\F\J\O\F\J\Lb{\vdots}\O\\
\M{}\O\O\O\O\X\O\F\J\O\X\O\\
\M{}\O\O\O\O\X\F\J\O\F\J\O\\
\M{}\O\O\F\X\X\B\X\X\J\O \\
\M{}\F\X\J\F\grayX\J\O\O\O\O \\
\M{\ell}\J\Lb{\vdots}\F\B\J\O\O\O\O \\
\M{}\F\X\J\X\O\O\O\O\O \\
\M{j}\J\Lb{\vdots}\F\J\O\O\O\O\O \\
\M{}\O\F\J\O\O\O\O\O \\
\M{k}\X\J\O\O\O\O\O\O \\
}\quad
\BPD[1pc]{
\M{}\M{}\M{}\M{i}\M{}\M{}\M{}\M{}\M{}\M{}\M{j}\M{}\M{k}\M{}\M{\ell}\M{}\\   
\M{}\X\X\X\O\O\O\O\O\O\X\O\X\F\J\O \\
\M{}\X\X\X\O\O\O\O\O\O\Lb{\vdots}\F\X\J\O\O\\
\M{i}\X\X\J\O\O\O\O\O\F\X\J\X\O\O\\
\M{}\O\O\O\O\O\O\F\X\J\Lb{\vdots}\F\J\\
\M{}\O\O\O\O\O\F\J\O\F\X\J\\
\M{}\O\O\O\O\F\J\O\F\J\Lb{\vdots}\O\\
\M{}\O\O\O\O\X\O\F\J\O\X\O\\
\M{}\O\O\O\O\X\F\J\O\F\J\O\\
\M{}\O\O\F\X\MB\B\X\X\J\O \\
\M{}\F\X\J\F\grayX\J\O\O\O\O \\
\M{\ell}\J\Lb{\vdots}\F\B\J\O\O\O\O \\
\M{}\F\X\J\X\O\O\O\O\O \\
\M{j}\J\Lb{\vdots}\F\J\O\O\O\O\O \\
\M{}\O\F\J\O\O\O\O\O \\
\M{k}\X\J\O\O\O\O\O\O \\
}
\]
\caption{Illustration for the proof of Theorem  \ref{bij-al}. }
\label{ill-avjo}
 
\end{figure}

We are now in a position to provide the  proofs of   Theorems \ref{thm:Grothendieck_main} and  \ref{main-1}. 
For any subword $v$, let 
  \[
 d_v{(\beta)}= \sum_{P\in \operatorname{CMRPD}(v)} \beta^{ \mathrm{mbt}(P)}.
  \]
Theorem \ref{thm:Grothendieck_main}
follows from    the following statement.

\begin{theorem}\label{hf-008}
Let $w$ be a permutation avoiding $1423$. Then we have $c_w(\beta)= d_w{(\beta)}$.    
\end{theorem}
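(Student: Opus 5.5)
We argue by induction on $n$, using the inclusion--exclusion form $c_w(\beta)=\sum_{\emptyset\le v\le w}(-1)^{|w|-|v|}\Upsilon_{\mathrm{perm}(v)}(\beta)$, or equivalently the recursive definition $\Upsilon_w(\beta)=\sum_{\emptyset\le v\le w} c_{\mathrm{perm}(v)}(\beta)$. The key input is Theorem \ref{bij-al}. For $w$ avoiding $1423$ it gives $\Upsilon_w(\beta)=\sum_{v\le w} d_v(\beta)$, because $\Phi$ is a bijection preserving the number of marked bumping tiles.

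The first step is to observe that $d_v(\beta)=d_{\mathrm{perm}(v)}(\beta)$ for every subword $v$. A marked reduced pipe dream of $v$ is exactly a marked reduced pipe dream of $\mathrm{perm}(v)$ with the pipe labels replaced order-preservingly by the entries of $v$. The conditions (i)--(iii) defining removable pipes, and the condition for a bumping tile to be markable, depend only on the geometry of the tiles and on which pipes cross, not on the actual label values. Hence relabeling gives a bijection $\mathrm{CMRPD}(v)\to\mathrm{CMRPD}(\mathrm{perm}(v))$ that preserves $\mathrm{mbt}$. Consequently
\[
\Upsilon_w(\beta)=\sum_{\emptyset\le v\le w} d_{\mathrm{perm}(v)}(\beta),
\]
where the term $v=\emptyset$ contributes $d_\emptyset=1$ from the empty pipe dream, consistent with $c_\emptyset=1$.

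The second step is to compare this with $\Upsilon_w(\beta)=\sum_{\emptyset\le v\le w} c_{\mathrm{perm}(v)}(\beta)$. Every proper subword $v<w$ has $\mathrm{perm}(v)$ avoiding $1423$ and of smaller size, so the induction hypothesis gives $c_{\mathrm{perm}(v)}(\beta)=d_{\mathrm{perm}(v)}(\beta)$. Subtracting the two expansions, all terms with $v<w$ cancel, leaving $c_w(\beta)=d_w(\beta)$. The base case $n=0$ is $c_\emptyset=d_\emptyset=1$.

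The main obstacle is the label-invariance in the first step. One must check that removability and the resulting core property are unchanged under order-preserving relabeling, including the ordering conventions implicit in $\Psi_j$: the number $\ell$ of entries less than $j$, and the requirement that pipes be labeled increasingly from left to right along the top. These are combinatorial features preserved by $\mathrm{perm}$, so I expect this check to be routine. Once Theorem \ref{bij-al} is assumed, the rest is formal bookkeeping.

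Finally, $d_w(\beta)$ is visibly in $\mathbb{Z}_{\ge0}[\beta]$, which yields Theorem \ref{thm:Grothendieck_main}. For Theorem \ref{main-1}, the same argument applied to $\sum_{u\le v\le w}(-1)^{|w|-|v|}\Upsilon_{\mathrm{perm}(v)}(\beta)$ should express this alternating sum as a count of pipe dreams in $\bigsqcup_{v\le w}\mathrm{CMRPD}(v)$ restricted to those $v\ge u$ with suitable inclusion--exclusion. That extension is handled after this theorem and is not needed here.
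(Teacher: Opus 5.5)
Your proposal is correct and follows the paper's proof: the paper likewise combines $d_v(\beta)=d_{\mathrm{perm}(v)}(\beta)$ with the mbt-preserving bijection of Theorem \ref{bij-al} to get $\Upsilon_w(\beta)=\sum_{v\le w} d_{\mathrm{perm}(v)}(\beta)$, and then inverts by inclusion--exclusion; your induction on $n$ is that same inversion done recursively, and it makes explicit that every subword of $w$ again avoids $1423$. Your relabeling argument for $d_v=d_{\mathrm{perm}(v)}$, which the paper only asserts, is also fine, since conditions (i)--(iii) are purely geometric (the remark about $\Psi_j$ is not needed).
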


\begin{proof}
Notice that \[d_v(\beta)=d_{\mathrm{perm}(v)}(\beta).\]
 By Theorem \ref{bij-al}, we obtain that 
    \[
\Upsilon_w(\beta)=\sum_{v\leq w} d_v(\beta)=\sum_{v\leq w} d_{\mathrm{perm}(v)}(\beta).
\]  
This implies 
\[
d_w{(\beta)}=\sum_{\emptyset\leq v\leq w} (-1)^{|w|-|v|} \Upsilon_{\mathrm{perm}(v)}{(\beta)},
\]
which exactly  agrees with $c_w(\beta)$. 
\end{proof}

We can use  similar arguments to  give a proof of Theorem \ref{main-1}. Fix a permutation $w$ and a subword $u\leq w$. For any subword $u\leq v\leq w$, 
let $\mathrm{CMRPD}(u,v)$ denote the set of marked reduced  pipe dreams $P$ of $v$ such that for any entry $j$ in $v$ but not in $u$, pipe $j$ is not removable in $P$. When $u=\emptyset$, it is clear that $\mathrm{CMRPD}(\emptyset,v)=\mathrm{CMRPD}(v)$.
Define 
\[
d_{u,v}(\beta)=\sum_{P\in \mathrm{CMRPD}(u,v)}\beta^{\mathrm{mbt}(P)}.
\]
Evidently, we have $d_{\emptyset, v}=d_v$ and $d_{u,v}=0$ for $u\not\leq v$.

\begin{proof}[Proof of Theorem \ref{main-1} ]
In  the reduction process, if we require that all pipes whose labels belong to $u$ cannot be deleted, then we obtain an injection 

 \[
\Phi_u\colon \mathrm{MRPD}(w)\rightarrow \bigsqcup_{u\leq v\leq w} \mathrm{CMRPD}(u,v).
\] 
Of course, when $w$ does not contain the pattern $1423$, by the proof of Theorem \ref{bij-al}, the above  injection will be   a bijection. 
So we have 
  \[
\Upsilon_w(\beta)=\sum_{u\leq v\leq w} d_{u,v}(\beta).
\]  
Notice that the poset structure of the interval $[u,w]$ is isomorphic to the Boolean lattice $[\emptyset, w']$ where $w'$ is the subword of $w$ by deleting the entries in $u$. 
So, applying   inclusion-exclusion, 
we deduce that 
\[
\sum_{u\leq v\leq w} (-1)^{|w|-|v|} \Upsilon_{\mathrm{perm}(v)}{(\beta)}=d_{u,w}(\beta) \in \mathbb{Z}_{\geq 0}[\beta],
\]
as desired. 
\end{proof}

\section{Concluding remarks}
\label{last-sec}

We lastly  discuss some points that are possibly of interest for further research. As aforementioned,
Me\'sz\'aros and Tanjaya \cite{meszaros2022inclusion} proved Conjecture 1.1 for permutations avoids simultaneously
1423 and 1432. In this setting, a combinatorial interpretation for $c_w$ in terms of certain
diagrams, which are less than or equal to the Rothe diagram of $w$ in the Gale order, is given
in \cite[Theorem]{meszaros2022inclusion}. See \cite[Section 3.3]{meszaros2022inclusion} for the concrete construction of such diagrams. On
the other hand, by Theorem \ref{hf-008} (in the case $\beta=0$), the value $c_w$ can be interpreted as the
number of core reduced pipe dreams of $w$.

\begin{problem} 
Does there exist a bijection between these two combinatorial objects which have been  developed  to count $c_w$ (where $w$ avoids $1423$   and    $1432$)?
\end{problem}

Knutson and Udell  \cite{knutson} introduced the hybrid pipe dream model for Schubert polynomials, which includes the pipe dream model and the bumpless pipe dream model as extremal cases. 
A hybrid pipe dream is defined in terms of  tilings of the $n\times n$ grid such that each row is either of type C, using the classical tiles 
\[\BPD{\X}, \BPD{\H},\BPD{\J} , \BPD{\F} ,\BPD{\B},\BPD{\O},\text{ but not } \BPD{\I},
\] or of  type B, using the bumpless tiles .
\[\BPD{\X}, \BPD{\H},\BPD{\Z} , \BPD{\L} ,\BPD{\I},\BPD{\O},\text{ but not } \BPD{\B}.
\]
It was proved in \cite{knutson} that hybrid pipe dreams give a formula for Schubert polynomials by fixing a  tiling type for each row, thus yielding a total of $2^n$ models for generating $\mathfrak{S}_w(x)$. 
It reproduces the classic pipe dream (resp., bumpless pipe dream) model when each row is of  type C (resp., B). 

The hybrid pipe dream model might serve as  a broader platform   which 
could provide a common framework for the techniques developed for
 pipe dreams  in this paper and for bumpless pipe dreams in \cite{ALCO_2025__8_3_745_0}.

\begin{problem}
Is it possible to explore a similar approach for hybrid pipe dreams?
\end{problem}

\footnotesize{

\textsc{\{Haojun Bai, Feng Gu, Peter L. Guo, Jiaji Liu\} Center for Combinatorics, Nankai University, LPMC, Tianjin 300071, P.R. China}

\vspace{10pt}
{\it
Email address: \tt  haojunbai@mail.nankai.edu.cn, fgu@mail.nankai.edu.cn, lguo@nankai.edu.cn, 

1120240006@mail.nankai.edu.cn}


\begin{thebibliography}{100}

\bibitem{anderson2026computationsamplingschubertspecializations}
D. Anderson, G. Panova and L. Petrov, Computation and sampling for Schubert specializations, arXiv: 2603.20104, 2026. 

\bibitem{bergeron1993rc}
N. Bergeron and  S.C. Billey, RC-graphs and Schubert polynomials, Experiment. Math. 2 (1993), no. 4, 257--269.



\bibitem{BS}
V. Buciumas and T. Scrimshaw, Double Grothendieck polynomials and colored lattice models, Int. Math. Res. Not. IMRN 2022, no. 10, 7231--7258. 

\bibitem{CS}
J. Chou and  L. Setiabrata, 
Asymptotically maximal Schubitopes, arXiv:2512.04053v1, 2025. 


\bibitem{ALCO_2025__8_3_745_0}
H. Dennin, Pattern bounds for principal specializations of $\beta ${-Grothendieck} polynomials, Algebr. Combin. 8 (2025), no. 3, 745--763.


\bibitem{2018Schubert}
A. Fink, K. M{\'e}sz{\'a}ros and A. St. Dizier,
Schubert polynomials as integer point transforms of generalized permutahedra,
Adv. Math. 332 (2018), 465--475.

\bibitem{fink2021zero}
A. Fink, K. M{\'e}sz{\'a}ros and A. St. Dizier,
Zero-one Schubert polynomials,
Math. Z. 297 (2021),   1023--1042.


\bibitem{fomin1994grothendieck}
S. Fomin and  A.N. Kirillov, Grothendieck polynomials and the Yang-Baxter equation, Center for Discrete Mathematics and Theoretical Computer Science (DIMACS), Piscataway, NJ, 2007, 183--189.

\bibitem{FS}
S. Fomin and R.P. Stanley, Schubert polynomials and the nil-Coxeter algebra, Adv. Math. 103 (1994),  196--207.



\bibitem{gao2021principal}
Y. Gao, Principal specializations of Schubert polynomials and pattern containment, European J. Combin. 94 (2021), Paper No. 103291, 12 pp.

\bibitem{guo2024schubert}
P.L. Guo and Z. Lin, Schubert polynomials and patterns in permutations, Math. Z., to appear.

\bibitem{HPSW}
Z. Hamaker,  O. Pechenik, D.E. Speyer and A. Weigandt,  Derivatives of Schubert polynomials and proof of a determinant conjecture of Stanley,  Algebraic  Combin. 3 (2020), 301--307.

\bibitem{JK}
G. James and A. Kerber, The representation theory of the symmetric group, Encyclopedia of Mathematics
and its Applications, vol. 16, Addison-Wesley Publishing Co., Reading, Mass., 1981, With a foreword by P. M. Cohn,
With an introduction by Gilbert de B. Robinson.



\bibitem{KNUTSON2004161}
A. Knutson and E. Miller, Subword complexes in Coxeter groups, Adv. Math. 184 (2004), no. 1, 161--176.

\bibitem{knutson2005grobner}
A. Knutson and E. Miller,  Gr{\"o}bner geometry of Schubert polynomials, Ann. of Math. (2) 161 (2005), no. 3, 1245--1318.

\bibitem{knutson}
A. Knutson and G. Udell, Interpolating between classic and bumpless pipe dreams, S{\'e}m. Lothar. Combin. 89B (2023), Art. 89, 12 pp.

\bibitem{LLS}
T. Lam, S.J. Lee and M. Shimozono, Back stable Schubert calculus, Compos. Math. 157 (2021),
no. 5, 883--962. 

\bibitem{LLS-2}
T. Lam, S.J. Lee and  M. Shimozono, Back stable K-theory Schubert calculus, Int. Math. Res. Not. 24 
(2023), 21381--21466.

\bibitem{lascoux1989fonctorialite}
A. Lascoux and  M.-P. Sch\"utzenberger,
Polyn\^omes de Schubert,
C. R. Acad. Sci. Paris Sér. I Math. 294 (1982),   447--450.

\bibitem{LS-Gro}
A. Lascoux and M.-P. Sch\"utzenberger, Structure de Hopf de l’anneau de cohomologie et de l’anneau de Grothendieck d’une vari\'et\'e de drapeaux, C. R. Acad. Sci.
Paris S\'er. I Math. 295 (1982), 629--633.


\bibitem{lenart2006grothendieck}
C. Lenart, S. Robinson and F. Sottile,  Grothendieck polynomials via
permutation patterns and chains in the Bruhat order, Amer. J. Math. 128 (2006), no. 4, 805--848.

\bibitem{Mac}
I.G. Macdonald, Notes on Schubert Polynomials, Laboratoire de combinatoire et d'informatique math\'ematique (LACIM), Universit\'e du Qu\'ebec \'a Montr\'eal, Montreal, 1991.


\bibitem{meszaros2022inclusion}
K. M{\'e}sz{\'a}ros and A. Tanjaya, Inclusion-exclusion on Schubert polynomials, Algebr. Combin. 5 (2022), no. 2, 209--226.

\bibitem{MPP-1}
A.H. Morales, I. Pak  and G. Panova, Asymptotics of principal evaluations of
 Schubert polynomials for layered permutations, Proc. Amer. Math. Soc.  147 (2019), 1377--1389.

\bibitem{MPP-2} A.H. Morales, I. Pak and G. Panova, Hook formulas for skew shapes IV. Increasing tableaux and 
factorial Grothendieck polynomials, J. Math. Sci. 261 (2022), 630--657.

 


\bibitem{morales2025grothendieck}
A.H. Morales, G. Panova, L. Petrov and D. Yeliussizov, Grothendieck shenanigans: permutons from pipe dreams via integrable probability, Adv. Math. 480 (2025), part C, Paper No. 110510, 63 pp.

\bibitem{stanley}
R.P. Stanley,
Some Schubert shenanigans,
arXiv:1704.00851v2, 2017.

\bibitem{weigandt2018schubert}
A. Weigandt, Schubert polynomials, 132-patterns, and Stanley’s conjecture, Algebr. Combin. 1 (2018), no. 4, 415--423.

\bibitem{Wei}
A. Weigandt, Bumpless pipe dreams and alternating sign matrices, J. Combin. Theory Ser. A 182 (2021),
105470. 




\bibitem{Zhang-1}
N. Zhang, 
Principal specializations of Schubert polynomials, multi-layered permutations and asymptotics, Adv. Appl. Math. 163 (2025),    102806, 19 pp.

\end{thebibliography}
\end{document}